\documentclass[11pt]{article}
\usepackage[utf8]{inputenc}
\usepackage[english]{babel}
\usepackage{cancel}
\usepackage{csquotes}
\usepackage{graphicx}
\usepackage{yhmath}
\usepackage{color}
\usepackage[utf8]{inputenc}
\usepackage{appendix}
\usepackage{amsmath}
\usepackage{amsthm}
\usepackage{amsfonts}
\usepackage{amssymb}
\usepackage{longtable}
\usepackage{cite}
\usepackage{url}
\usepackage{graphics}
\usepackage{subcaption}
\usepackage{epsfig}
\usepackage{geometry}
\usepackage{verbatim}
\usepackage{tikz}
\usetikzlibrary{fit,shapes.misc}

\DeclareMathOperator{\GL}{GL}

\usepackage{listings,xcolor}
\definecolor{dkgreen}{rgb}{0,.6,0}
\definecolor{dkblue}{rgb}{0,0,.6}
\definecolor{dkyellow}{cmyk}{0,0,.8,.3}

\newtheorem{theorem}{Theorem}[section]

\newtheorem{remark}[theorem]{Remark}
\newtheorem{cor}[theorem]{Corollary}
\newtheorem{prop}[theorem]{Proposition}
\newtheorem{definition}[theorem]{Definition}

\newtheorem{oss}[theorem]{Observation}

\newtheorem{ex}[theorem]{Example}
\def\field{\mathbb{}}

\title{New scattered subspaces in higher dimensions}
\author{Daniele Bartoli\thanks{Department of Mathematics and Informatics, University of Perugia, Perugia, Italy.
Email address: daniele.bartoli@unipg.it}\hspace{0.15 cm}, Alessandro Giannoni\thanks{Department of Mathematics and applications ``Renato Caccioppoli", University Federico II of Napoli, Napoli, Italy.
Email address: alessandro.giannoni@unina
.it}\hspace{0.15 cm} and Giuseppe Marino\thanks{Department of Mathematics and applications ``Renato Caccioppoli", University Federico II of Napoli, Napoli, Italy.
Email address: giuseppe.marino@unina
.it}\hspace{0.15 cm}}

\date{}

\begin{document}
\maketitle

\makeatother
\newcommand{\Prf}{\noindent{\bf Proof}.\quad }
\renewcommand{\labelenumi}{(\alph{enumi})}

\def\as{{^{\sigma}}}
\def\e{{\`{e }}}
\def\a{{\`{a}}}
\def\qi{{^{q^I}}}
\def\qii{{^{q^{2I}}}}
\def\qj{{^{q^J}}}
\def\qjj{{^{q^{2J}}}}
\def\qio{{^{q^{I_0}}}}
\def\qjo{{^{q^{J_0}}}}
\def\qk{{^{q^K}}}
\def\qji{{^{q^{J-I}}}}
\def\qij{{^{q^{I+J}}}}
\def\qjia{{^{q^{J-I}+1}}}
\def\qija{{^{q^{I-J}+1}}}
\def\field{{\mathbb F_{q^n}}}
\def\us{U_{\mathbf{A}}^{I,J}}
\def\fax{{x^{q^I} +\alpha y^{q^J}}}
\def\fbx{{x\qj +\beta z\qi}}
\def\fcx{{y\qi +\gamma z\qj}}
\def\faxl{{\lambda\qi x\qi +\alpha\lambda\qj y\qj}}
\def\fbxl{{\lambda\qj x\qj +\beta\lambda\qi z\qi}}
\def\fcxl{{\lambda\qi y\qi +\gamma\lambda\qj z\qj}}
\newcommand{\fal}[2]{{\lambda\qi #1\qi +\alpha\lambda\qj #2\qj}}
\newcommand{\fbl}[2]{{\lambda\qj #1\qj +\beta\lambda\qi #2\qi}}
\newcommand{\fcl}[2]{{\lambda\qi #1\qi +\gamma\lambda\qj #2\qj}}
\newcommand{\fa}[2]{{#1^{q^I} +\alpha #2^{q^J}}}
\newcommand{\fb}[2]{{#1\qj +\beta #2\qi}}
\newcommand{\fc}[2]{{#1\qi +\gamma #2\qj}}

\begin{abstract}
Over the past few decades, there has been extensive research on scattered subspaces, partly because of their link to MRD codes. These subspaces can be characterized using linearized polynomials over finite fields. Within this context, scattered sequences extend the concept of scattered polynomials and can be viewed as geometric equivalents of exceptional MRD codes. Up to now, only scattered sequences of orders one and two have been developed. However, this paper presents an infinite series of exceptional scattered sequences of any order beyond two which correspond to scattered subspaces that cannot be obtained as direct sum of scattered subspaces in smaller dimensions. The paper also addresses equivalence concerns within this framework.
\end{abstract}

\bigskip

\par\noindent
{\bf Keywords:} Scattered subspaces,  Scattered sequences, Evasive subspaces

\section{Introduction}

Scattered subspaces 
can be seen as the geometrical counterparts of 
MRD codes.
Rank-distance (RD) codes were introduced already in the late 70's by Delsarte \cite{delsarte1978bilinear} and then rediscovered by Gabidulin a few years later \cite{gabidulin1985theory}. Due to their applications in network coding \cite{MR2450762} and cryptography \cite{gabidulin1991ideals,MR3678916}, they attracted lots of attention in the last decade.
RD codes are sets of matrices over a finite field $\mathbb{F}_q$ endowed with the so-called rank distance: the distance  between two elements is defined as the rank of their difference. Among them, of particular interest is the family of rank-metric codes whose parameters are optimal, i.e.  they have the maximum possible cardinality for a given minimum rank. Such codes are called \emph{maximum rank distance (MRD) codes} and constructing new families is an important and active research task.

From a different perspective, rank-metric codes can also  be seen as sets of (restrictions of) $\mathbb{F}_q$-linear homomorphisms from $(\mathbb{F}_{q^n})^m$ to $\mathbb{F}_{q^n}$ equipped with the rank distance; see \cite[Sections 2.2 and 2.3]{bartoli2022exceptional}.

Notably, scattered subspaces can also be described via linearized polynomials. In the  univariate case, such a connection was already exploited in \cite{sheekey2016new} by Sheekey, where the notion of scattered polynomials was introduced; see also \cite{bartoli2018exceptional}. Let $\mathcal{L}_{n,q}[X]$ be the set of $q$-linearized polynomials. For a polynomial $f \in \mathcal{L}_{n,q}[X]$ and a nonnegative integer $t\leq n-1$ we say that $f$ is {scattered of index $t$} if for every $x,y \in \mathbb{F}_{q^n}^*$
\[ \frac{f(x)}{x^{q^t}}=\frac{f(y)}{y^{q^t}}\,\, \Longleftrightarrow\,\, \frac{y}x\in \mathbb{F}_q, \]
or equivalently
\begin{equation*} \dim_{\mathbb{F}_q}(\ker(f(x)-\alpha x^{q^t}))\leq 1, \,\,\,\text{for every}\,\,\, \alpha \in \mathbb{F}_{q^n}. \end{equation*}
In a more geometrical setting, a scattered polynomial is connected with a scattered subspace of the projective line; see \cite{blokhuis2000scattered}. From a coding theory point of view,  $f$ is scattered of index $t$ if and only if $\mathcal{C}_{f,t}=\langle x^{q^t}, f(x) \rangle_{\mathbb{F}_{q^n}}$ is an MRD code with $\dim_{\mathbb{F}_{q^n}}(\mathcal{C}_{f,t})=2$.
The polynomial $f$ is said to be \emph{exceptional scattered} of index $t$ if it is scattered of index $t$ as a polynomial in $\mathcal{L}_{\ell n,q}[X]$, for infinitely many $\ell$; see \cite{bartoli2018exceptional}. The classification of exceptional scattered polynomials is still not complete, although it gained the attention of several researchers \cite{Bartoli:2020aa4,bartoli2018exceptional,MR4190573,MR4163074,bartoli2022towards}.

So far, many families of scattered polynomials have been constructed; see \cite{smaldore2024scattered, bartoli2023scattered, sheekey2016new,lunardon2018generalized,lunardon2000blocking,zanella2019condition,MR4173668,longobardi2021linear,longobardi2021large,NPZ,zanella2020vertex,csajbok2018new,csajbok2018new2,marino2020mrd,blokhuis2000scattered}. 

Among them, only two families are exceptional:
\begin{itemize}
    \item[(Ps)] $f(x)=x^{q^s}$ of index $0$, with $\gcd(s,n)=1$ (polynomials of so-called pseudoregulus type);
    \item[(LP)] $f(x)=x+\delta x^{q^{2s}}$ of index $s$, with $\gcd(s,n)=1$ and $\mathrm{N}_{q^n/q}(\delta)\ne1$ (so-called LP polynomials).
\end{itemize}

The generalization of the notion of exceptional scattered polynomials, together with  their connection with $\mathbb{F}_{q^n}$-linear MRD codes of $\mathbb{F}_{q^n}$-dimension $2$, yielded the introduction of the concept of 
 $\mathbb{F}_{q^n}$-linear MRD codes of \emph{exceptional type}; see \cite{bartoli2021linear}. An $\mathbb{F}_{q^n}$-linear MRD code $\mathcal{C}\subseteq\mathcal{L}_{n,q}[X]$ is an \emph{exceptional MRD code} if the rank metric code
\[ \mathcal{C}_\ell=\langle\mathcal{C}\rangle_{\mathbb{F}_{q^{\ell n}}}\subseteq \mathcal{L}_{\ell n,q}[X] \]
is an MRD code (whose left idealizer is equivalent to $\{\alpha x\colon \alpha \in\mathbb{F}_{q^n}\}$) for infinitely many $\ell$.

Only two families of exceptional $\mathbb{F}_{q^n}$-linear MRD codes are known:
\begin{itemize}
    \item[(G)] $\mathcal{G}_{k,s}=\langle x,x^{q^s},\ldots,x^{q^{s(k-1)}}\rangle_{\mathbb{F}_{q^n}}$, with $\gcd(s,n)=1$; see \cite{delsarte1978bilinear,gabidulin1985theory,kshevetskiy2005new};
    \item[(T)] $\mathcal{H}_{k,s}(\delta)=\langle x^{q^s},\ldots,x^{q^{s(k-1)}},x+\delta x^{q^{sk}}\rangle_{\mathbb{F}_{q^n}}$, with $\gcd(s,n)=1$ and $\mathrm{N}_{q^n/q}(\delta)\neq (-1)^{nk}$; see  \cite{sheekey2016new,lunardon2018generalized}. 
\end{itemize}
The first family is known as \emph{generalized Gabidulin codes} and the second one as \emph{generalized twisted Gabidulin codes}, whereas in \cite{bartoli2018exceptional} it has been shown that the only exceptional $\mathbb{F}_{q^n}$-linear MRD codes spanned by monomials are the codes (G), in connection with so-called \emph{Moore exponent sets}. Non-existence results on exceptional MRD codes were provided in \cite[Main Theorem]{bartoli2021linear}.

A generalization of MRD codes of exceptional type is connected with the  notions of $h$-scattered sequences and exceptional $h$-scattered sequences, introduced in \cite{bartoli2022exceptional} as sequences of multivariate linearized polynomials $f_1,\ldots, f_s\in \mathcal{L}_{n,q}[X_1,\ldots,X_m]$, such that there exists 
$\mathcal I=(i_1,\ldots, i_m) \in \mathbb{N}^m$ so that the space 
$$\{ (x_1^{q^{i_1}}, \ldots,x_m^{q^{i_m}},f_1(x_1,\ldots,x_m), \ldots,f_s(x_1,\ldots,x_m) ) \, : \, x_1,\ldots,x_m \in \mathbb{F}_{q^n}\}$$
is $h$-scattered. 

Using this new terminology, exceptional $\mathbb{F}_{q^n}$-linear MRD codes correspond to exceptional scattered sequences of order 1. In  \cite{bartoli2022exceptional} exceptional scattered sequences of order 2 were investigated for the first time. Clearly, when considering sequences of order larger than one, one must check that these examples are really "new", i.e. they cannot be obtained as direct sum of two scattered sequences of smaller order. This led to the notion of indecomposability; see \cite{bartoli2022exceptional}.

Recently, in \cite{ABNR22} a new class of rank-metric codes has been introduced and investigated{, the family} of \emph{minimal rank-metric codes}. The peculiarity of these codes is that the set of supports -- which in the rank-metric setting is given by $\mathbb{F}_{q}$-subspaces of $\mathbb{F}^n_{q}$ -- form an antichain with respect to the {inclusion } {and they} are the $q$-analogues of \emph{minimal linear codes} in the Hamming metric, which have been shown to have interesting  combinatorial and geometric properties  and applications to secret sharing schemes, as proposed by Massey {\cite{Massey1993,Massey1995}}. Using the geometric viewpoint, minimal rank-metric codes have been shown in \cite{ABNR22} to correspond to linear cutting blocking sets, which are special $q$-systems $U$ with the following intersection property: for every $\mathbb{F}_{q^m}$-hyperplane {$H$}, it holds $\langle U\cap H\rangle_{\mathbb{F}_{q^m}}=H$.
Furthermore, always in \cite{ABNR22}, constructions of minimal rank-metric codes were proposed using \emph{scattered subspaces}  {in a $3$-dimensional $\mathbb{F}_{q^m}$-space.}

In this paper we consider sequences of order $m\geq 3$ and we exhibit the first example of indecomposable exceptional scattered sequence of order $m$. The equivalence issue is also considered and it is worth mentioning that our family is quite large since it contains many non-equivalent sequences. The MRD codes associated to these sequences are also minimal.

\section{Main Results}
Let $q=p^h$, where $p$ is a prime and $h>0$ an integer, and denote by $\mathbb{F}_q$ the finite field with $q$ elements. 

We start with the definition of scattered sequences.

\begin{definition}\cite[Definition 3.1]{bartoli2022exceptional}\label{Def:ScatteredSequence}
 Consider $\mathcal{F}=(f_1,\ldots,f_s)$, with $f_1,\ldots,f_s\in \mathcal{L}_{n,q}[\underline{X}]$, which is the set of $q$-linearized polynomials in $\underline{X}=(X_1,\dots,X_m)$.
 
 We define $$U_{\mathcal{F}}:=\{(f_1(x_1,\ldots,x_m), \ldots,f_s(x_1,\ldots,x_m) ) \, : \, x_1,\ldots,x_m \in \mathbb{F}_{q^n}\}.$$
 
 Let $\mathcal{I}:=(i_1,i_2,\dots,i_m)\in (\mathbb{Z}/n\mathbb{Z})^m$, we define the \textbf{$\mathcal I$-space}
$U_{\mathcal I,\mathcal{F}}:=U_{\mathcal{F}'},$
where
$$\mathcal F'=(X_1^{q^{i_1}},\ldots, X_m^{q^{i_m}}, f_1,\ldots, f_s).$$
The $s$-tuple $\mathcal{F}:=(f_1,\ldots,f_s)$ is said to be an \textbf{$(\mathcal I;h)_{q^n}$-scattered sequence of order $m$} if the $\mathcal I$-space
$U_{\mathcal I,\mathcal{F}}$
is maximum $h$-scattered in $V(m+s,q^{n})$. An $(\mathcal I;h)_{q^n}$-scattered sequence  $\mathcal{F}:=(f_1,\ldots,f_s)$ of order $m$  is said to be \textbf{exceptional} if it is $h$-scattered over infinitely many extensions $\mathbb{F}_{q^{n\ell}}$ of $\mathbb{F}_{q^n}$.

\end{definition}

The main issue, when considering scattered sequences of order larger than one, is given by its indecomposability.
\begin{definition}
 An $nm$-dimensional $\mathbb{F}_q$-subspace $U_{\mathcal{H}}$ of $V(k,q^n)$   is said to be \textbf{decomposable} if it can be written as
 $$U_{\mathcal{H}}=U_{\mathcal F}\oplus U_{\mathcal G}$$
 for some nonempty $\mathcal F, \mathcal G$. 
 When this happens we say that $\mathcal{F}$ and $\mathcal{G}$ are \textbf{factors} of $\mathcal{H}$. Furthermore, 
 $U$ is then said to be \textbf{indecomposable} if it is not decomposable.
\end{definition}

Let $\mathcal I:=(i_1,\ldots,i_m)$, $\mathcal J:=(j_1,\ldots,j_{m^{\prime}})$, let $\mathcal{F}=(f_1,\ldots,f_{s})$ and $\mathcal{G}=(g_1,\ldots,g_{s^{\prime}})$ be $(\mathcal I;h)_{{q^n}}$ and $(\mathcal J;h)_{{q^n}}$-scattered sequences of orders $m$ and $m^{\prime}$, respectively.
The {direct sum} $\mathcal{H}:=\mathcal{F}\oplus \mathcal{G}$ is the $(s+s^{\prime})$-tuple $(f_1,\ldots,f_s,g_1,\ldots,g_{s^{\prime}})$. Since 
$$U_{\mathcal I\oplus\mathcal J,\mathcal{H}}=U_{\mathcal I,\mathcal{F}}\oplus U_{\mathcal J,\mathcal{G}},$$
$\mathcal{H}$ is an $(\mathcal I \oplus \mathcal J;h)_{q^{n}}$-scattered sequence of order $m+m^{\prime}$.

The main achievement of this paper is the first example of indecomposable exceptional scattered sequences of order larger than two. 
\begin{definition}Let $n,m$ be positive integers, with $m\geq3$
and $q$ a prime power. Consider the finite field ${\mathbb {F}}_{{q^n}}$. For each choice of  $\alpha_1 ,\dots, \alpha_m\in{\mathbb {F}}_{{q^n}}^*$ and  $I,J\in\mathbb{N}$, $I<J<n$ we define the set:
$$U^{I,J}_{\textbf{A}}:=\{(x_1,\dots,x_m,f_1(\underline{x}),f_2(\underline{x}),\dots,f_{m-1}(\underline{x}),f_m(\underline{x})) : x_1,\dots,x_m\in\field\},$$
where $\textbf{A}:=(\alpha_1,\dots,\alpha_m)$, $f_m(\underline{x}):=x_m\qi+\alpha_1x_1\qj$ and $f_i(\underline{x}):=x_i\qi+\alpha_{i+1}x_{i+1}\qj$ with $i=1,\dots,m-1$. 
\end{definition}

From now on, we will denote $J-I$ as $K$ and $(q^{h\ell}-1)/(q^h-1)$ as $C_{h,\ell}$.
\begin{theorem}\label{thm1}
Assume that $\gcd(I,J)=1$ and that $$K_{\textbf{A}}^{I,J}:=\frac{\alpha_3\cdot\alpha_4^{q^K}\cdot\alpha_5^{q^{2K}}\dots\alpha_m^{q^{(m-3)K}}\cdot\alpha_1^{q^{(m-2)K}}}{\alpha_2^{C_{K,m-1}}}$$ is not a $C_{K,m}$-power in $\mathbb{F}_{q^n}$. Then the set $U^{I,J}_{\textbf{A}}$ is scattered.
\end{theorem}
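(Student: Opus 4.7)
The plan is to translate the scattered condition into a cyclic system over $\field$ and derive a contradiction from the hypothesis that $K^{I,J}_{\mathbf{A}}$ is not a $C_{K,m}$-th power. Suppose $\us$ is not scattered: there exist $\lambda\in\field\setminus\mathbb F_q$ and a nonzero $v=(x_1,\dots,x_m,f_1(\underline x),\dots,f_m(\underline x))\in\us$ with $\lambda v\in\us$, equivalently $\lambda f_i(\underline x)=f_i(\lambda x_1,\dots,\lambda x_m)$ for every $i$. Setting $a:=\lambda\qi-\lambda$ and $b:=\lambda\qj-\lambda$, these relations take the cyclic form (with the convention $\alpha_{m+1}:=\alpha_1$)
\begin{equation*}
a\,x_i\qi+\alpha_{i+1}\,b\,x_{i+1}\qj=0,\qquad i=1,\dots,m.
\end{equation*}
If exactly one of $a,b$ vanishes, the system immediately forces $\underline x=0$; if both vanish, then $\lambda\in\mathbb F_{q^{\gcd(I,n)}}\cap\mathbb F_{q^{\gcd(J,n)}}=\mathbb F_{q^{\gcd(I,J,n)}}=\mathbb F_q$ because $\gcd(I,J)=1$. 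Hence I may assume $a,b\neq 0$, set $c:=-b/a\in\field^*$, and reduce to $x_i\qi=c\,\alpha_{i+1}\,x_{i+1}\qj$ cyclically, with all $x_i\neq 0$ (a single $x_i=0$ would propagate around the cycle).

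To close the cycle I would proceed as follows. Raising the $i$-th equation to the $q^{(i-1)K}$-th power (with $K=J-I$) turns its left-hand side into $x_i^{q^{J+(i-2)K}}$; successive substitutions eliminate $x_2,x_3,\dots,x_m$ from the relation starting at $x_1$, and a final use of the $m$-th equation raised to $q^{(m-1)K}$ together with the telescoping identity $1+q^K+\cdots+q^{(m-1)K}=C_{K,m}$ yields
\begin{equation*}
x_1\qi=B\cdot x_1^{q^{I+mK}},\qquad B:=c^{C_{K,m}}\,\alpha_2\,\alpha_3^{q^K}\,\alpha_4^{q^{2K}}\cdots\alpha_m^{q^{(m-2)K}}\,\alpha_1^{q^{(m-1)K}}.
\end{equation*}
Since $x_1\neq 0$, rearranging gives $(x_1\qi)^{q^{mK}-1}=B^{-1}$, so $B^{-1}$ must be a $(q^{mK}-1)$-th power in $\field^*$.

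To conclude, I would verify the algebraic identity
\begin{equation*}
B=(K^{I,J}_{\mathbf{A}})^{q^K}\cdot(\alpha_2 c)^{C_{K,m}},
\end{equation*}
which follows at once from $q^K\cdot C_{K,m-1}=C_{K,m}-1$. Since $q^{mK}-1=(q^K-1)\,C_{K,m}$, every $(q^{mK}-1)$-th power in $\field^*$ is automatically a $C_{K,m}$-th power, hence so is $B$. As $(\alpha_2 c)^{C_{K,m}}$ is visibly a $C_{K,m}$-th power, the same must be true of $(K^{I,J}_{\mathbf{A}})^{q^K}$; and since $x\mapsto x^{q^K}$ is a bijection on $\field^*$, $K^{I,J}_{\mathbf{A}}$ itself would be a $C_{K,m}$-th power, contradicting the hypothesis. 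The main obstacle is the unwinding of the cyclic recursion in the second paragraph: one has to manage the tower of $q^{(i-1)K}$-Frobenii cleanly enough to recognize the precise form of $B$ and its factorization as $(K^{I,J}_{\mathbf{A}})^{q^K}(\alpha_2 c)^{C_{K,m}}$; once this is in hand, the descent from $(q^{mK}-1)$-powers to $C_{K,m}$-powers closes the argument in one stroke.
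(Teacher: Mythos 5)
Your proposal is correct and follows essentially the same route as the paper: both reduce a putative scaling by $\lambda\in\mathbb{F}_{q^n}\setminus\mathbb{F}_q$ to the cyclic system $(\lambda^{q^I}-\lambda)x_i^{q^I}+(\lambda^{q^J}-\lambda)\alpha_{i+1}x_{i+1}^{q^J}=0$, telescope it through $q^K$-Frobenius powers using $q^K C_{K,m-1}=C_{K,m}-1$, and contradict the hypothesis that $K_{\mathbf{A}}^{I,J}$ is not a $C_{K,m}$-power. The only cosmetic difference is that the paper eliminates $\lambda$ via vanishing $2\times 2$ minors and the normalization $y_i=x_i^{q^I}/x_1^{q^I}$, landing directly on $y_2^{C_{K,m}}=K_{\mathbf{A}}^{I,J}$, whereas you divide by $\lambda^{q^I}-\lambda$ and pass through a $(q^{mK}-1)$-power statement before descending to $C_{K,m}$-powers; both steps are valid.
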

\begin{proof} Let $\lambda\in\mathbb{F}_{q^n}\setminus\mathbb{F}_q$ be such that
$$(u_1,\dots,u_m,f_1(\underline{u}),\dots,f_m(\underline{u}))=\lambda(x_1,\dots,x_m,f_1(\underline{x}),\dots,f_m(\underline{x})),$$
where $\underline{x}=(x_1,\dots,x_m),\underline{u}=(u_1,\dots,u_m)\in\field^m$. $\us$ is scattered if and only if the equation holds only for $(x_1,\dots,x_m)=\underline 0$.

By way of contradiction, we assume $\underline{x}\neq\underline 0$. We have the system
$$\begin{cases}
\lambda x_1\qi+\lambda\alpha_2x_2\qj=\lambda\qi x_1\qi+\lambda\qj\alpha_2x_2\qj\\
    \vdots\\
    \lambda x_i\qi+\lambda\alpha_{i+1}x_{i+1}\qj=\lambda\qi x_i\qi+\lambda\qj\alpha_{i+1}x_{i+1}\qj, \hspace{8 mm} i=2,\dots,m-1\\
    \vdots\\
    \lambda x_m\qi+\lambda\alpha_1x_1\qj=\lambda\qi x_m\qi+\lambda\qj\alpha_1x_1\qj.\\
\end{cases}$$
So we obtain
$$\begin{cases}
    (\lambda\qi -\lambda)x_1\qi+(\lambda\qj-\lambda)\alpha_2 x_2\qj=0\\
    \vdots\\
    (\lambda\qi -\lambda)x_i\qi+(\lambda\qj-\lambda) \alpha_{i+1}x_{i+1}\qj=0,\hspace{8 mm} i=2,\dots,m-1\\
    \vdots\\
    (\lambda\qi -\lambda)x_m\qi+(\lambda\qj-\lambda)\alpha_1 x_1\qj=0.
\end{cases}$$
It is easy to see that a nontrivial solution $\underline x$ must satisfy  $x_1\cdot x_2\cdots x_{m-1}\cdot x_m\neq 0$.

We note that this is a linear system in the variables $(\lambda\qi -\lambda) and (\lambda\qj-\lambda)$, and since $\lambda\not\in\mathbb{F}_q$, $(\lambda\qi -\lambda)\neq 0$ or $(\lambda\qj -\lambda)\neq 0$ holds. The above system has a nontrivial solution and thus all the $2\times 2$ submatrices of
$$M:=\begin{pmatrix}
    x_1\qi&\alpha_2 x_2\qj\\
    x_2\qi&\alpha_3 x_3\qj\\
    \vdots&\vdots\\
    x_i\qi&\alpha_{i+1} x_{i+1}\qj\\
    \vdots&\vdots\\
    x_{m-1}\qi&\alpha_m x_m\qj\\
    x_m\qi&\alpha_1 x_1\qj
\end{pmatrix}$$
are singular, in particular 
\begin{align*}
    x_1\qi\alpha_ix_i\qj-x_{i-1}\qi\alpha_2x_2\qj=0,&\hspace{8 mm} i=3,\dots,m,\\
    x_1^{q^I+q^J}\alpha_1-\alpha_2x_m\qi x_2\qj=0.\phantom{...}&
\end{align*}
Letting $y_i:=x_i\qi/x_1\qi $ for $ i=2,\dots,m$ and dividing the equations by $x_1^{q^I+q^J}$ we get
\begin{align*}
   (E_i)\hspace{20mm} \alpha_iy_i\qk-y_{i-1}\alpha_2y_2\qk=0,\hspace{8 mm} i=3,\dots,m,&\phantom{ffffffffffffff}\\
    (E)\hspace{23mm}\alpha_1-\alpha_2y_m y_2\qk=0.\phantom{......rrrrrrrrrrrr.rrr}
\end{align*}
From $(E_m)$ we obtain $$y_{m-1}=\frac{\alpha_m}{\alpha_2}\frac{y_m\qk}{y_2\qk}.$$
Substituting \(y_{m - 1}\) in \((E_{m-1})\), we obtain 
$$y_{m-2}=\frac{\alpha_{m-1}\alpha_m\qk}{\alpha_2^{q^K+1}}\frac{y_m^{q^{2K}}}{y_2^{q^{2K}+q^K}}.$$
We can iterate this procedure and get
\begin{equation}\label{eqlunga}
    y_{m-(m-2)}=y_2=\frac{\alpha_3\cdot\alpha_4\qk\dots\alpha_m^{q^{(m-3)K}}}{\alpha_2^{C_{K,m-2}}}\frac{y_m^{q^{(m-2)K}}}{y_2^{C_{K,m-1}-1}}.
    \end{equation}
From $(E)$ we obtain $y_m=\alpha_1/(\alpha_2y_2\qk)$ and substituting in (\ref{eqlunga}) we get
$$y_2=\frac{\alpha_3\cdot\alpha_4\qk\dots\alpha_m^{q^{(m-3)K}}}{\alpha_2^{C_{K,m-2}}}\frac{\alpha_1^{q^{(m-2)K}}}{\alpha_2^{q^{(m-2)K}}}\frac{1}{y_2^{C_{K,m-1}-1}\cdot y_2^{q^{(m-1)K}}}.$$
Thus $$y_2=\frac{\alpha_3\cdot\alpha_4\qk\dots\alpha_m^{q^{(m-3)K}}\cdot \alpha_1^{q^{(m-2)K}}}{\alpha_2^{C_{K,m-1}}}\frac{1}{y_2^{C_{K,m}-1}}.$$
Thus $$y_2^{C_{K,m}}=K_{\textbf{A}}^{I,J},$$ a contradiction to our hypothesis. Therefore 
$U^{I,J}_{\textbf{A}}$ is scattered. 
\end{proof}
We now prove that $U^{I,J}_{\textbf{A}}$ is exceptional scattered. 
\begin{prop}\label{propTN} 
    Let $n\in\mathbb{N}$, let $A \in \mathbb{N}$ such that $\gcd(q,A)=1$, then there exist infinitely many $h \in \mathbb{N}$ such that $\gcd(A,C_{n,h})=1$.
\end{prop}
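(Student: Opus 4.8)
The plan is to reduce the claim to a prime‑by‑prime statement and then produce an explicit infinite family of admissible exponents, namely all sufficiently large primes. Recall that $C_{n,h}=1+q^{n}+q^{2n}+\cdots+q^{(h-1)n}$, and that $\gcd(A,C_{n,h})=1$ holds if and only if no prime $p'$ dividing $A$ divides $C_{n,h}$. So it suffices to understand, for each prime $p'\mid A$ separately, exactly which exponents $h$ make $p'\mid C_{n,h}$, and then to choose $h$ lying outside all the resulting ``bad'' sets at once.

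So fix a prime $p'\mid A$. Since $\gcd(q,A)=1$ we have $p'\nmid q$, hence $q^{n}$ is a unit modulo $p'$; let $d=d(p')$ be its multiplicative order, so that $d\mid p'-1$ and in particular $d\le p'-1<p'\le A$. I would then split into two cases. If $q^{n}\equiv 1\pmod{p'}$ (equivalently $d=1$), reducing $C_{n,h}$ term by term gives $C_{n,h}\equiv h\pmod{p'}$, so $p'\mid C_{n,h}$ precisely when $p'\mid h$. If $q^{n}\not\equiv 1\pmod{p'}$ (equivalently $d>1$), then $q^{n}-1$ is invertible modulo $p'$ and $C_{n,h}=(q^{nh}-1)/(q^{n}-1)$, so $p'\mid C_{n,h}$ precisely when $q^{nh}\equiv 1\pmod{p'}$, that is, when $d\mid h$.

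Finally I would take $h=\ell$ to be any prime with $\ell>A$; there are infinitely many such primes. For each prime $p'\mid A$ with $d(p')=1$, the condition $p'\mid\ell$ fails since $0<p'\le A<\ell$ and $\ell$ is prime; for each prime $p'\mid A$ with $d(p')>1$, the condition $d(p')\mid\ell$ fails since $1<d(p')\le p'-1<\ell$ would force $d(p')$ to be a divisor of the prime $\ell$ other than $1$ and $\ell$, which is impossible. Hence no prime divisor of $A$ divides $C_{n,\ell}$, i.e. $\gcd(A,C_{n,\ell})=1$, for every prime $\ell>A$, which proves the proposition. I do not expect a genuine obstacle: the argument is entirely elementary, and the only point to handle with care is the uniform bound $d(p')\le p'-1$ coming from Lagrange's theorem in $(\mathbb{Z}/p'\mathbb{Z})^{*}$, which is exactly what lets a single large prime $\ell$ dodge all the bad congruence classes simultaneously.
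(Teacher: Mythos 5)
Your proof is correct, and it takes a genuinely different, more explicit route than the paper's. You work prime by prime: for each prime $p'\mid A$ you characterize exactly which exponents $h$ make $p'\mid C_{n,h}$ in terms of the multiplicative order $d(p')$ of $q^n$ modulo $p'$ --- the bad $h$ are the multiples of $p'$ when $d(p')=1$ and the multiples of $d(p')$ when $d(p')>1$ --- and then observe that any prime $\ell>A$ simultaneously avoids all of these finitely many arithmetic progressions, since every bad modulus lies strictly between $1$ and $\ell$. The paper instead argues by contradiction: it assumes that for every sufficiently large $h$ some prime of $A$ divides $C_{n,h}$, notes that a fixed prime coprime to $q$ can never divide two consecutive values $C_{n,h}$ and $C_{n,h+1}$, and via an inductive divisibility recursion manufactures a single exponent at which every offending prime of $A$ divides simultaneously, forcing the next exponent to be coprime to $A$ and yielding the contradiction. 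Your argument is shorter, constructive (it exhibits an explicit infinite family of good exponents, namely all primes exceeding $A$, and more generally any $h$ coprime to the finitely many bad moduli), and it makes the underlying number theory transparent; the paper's argument is more combinatorial and produces no concrete good $h$. The one delicate point in your write-up, the uniform bound $d(p')\le p'-1\le A-1<\ell$, is correctly justified by Lagrange's theorem in $(\mathbb{Z}/p'\mathbb{Z})^{*}$.
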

    \begin{proof}
    Consider the factorization of $A=p_1\cdot p_2\cdots p_N$. By the way of contradiction suppose that there are not infinitely many $h \in \mathbb{N}$ such that $\gcd(A,C_{n,h})=1$.
    Therefore, there must exist an $\overline{h}$ such that $$\forall j>0 \text{ there exists an } i_j\text{ such that }p_{i_j}|f(j):=\frac{q^{n(\overline{h}+j)}-1}{q^n-1}=1+q^n+\cdots+q^{n(\overline{h}-1+j)}.$$
We select the primes from the factorization of $A$ that divide at least one $f(j)$ and we denote them by $\{p_1, \ldots, p_M\}$.

We note that \begin{itemize}
    \item if $p_i|f(j)$ then $p_i\not |f(j+1)$ since $p_i\nmid q$;
    \item if $p_i|f(j)$ then $p_i \mid f(kj+(k-1)\overline{h})$ \text{ for all } $k>0$.
\end{itemize}
 We prove second property by induction:
 \begin{itemize}
     \item[-] $k=2$: Since $p_i \mid q^{n(\overline{h}+j)}(1+q^n+\dots+q^{n(\overline{h}-1+j)})$, we have that $p_i\mid f(2j+\overline{h})=1+q^n+\dots+q^{n({\overline{h}}-1+j)}+q^{n(\overline{h}+j)}+\cdots+q^{n(2\overline{h}-1+2j)}$.
\item[-]$P(k-1)\to P(k)$:
The proof is analogous to the base case noticing that $f(kj+(k-1)\overline{h})=f((k-1)j+(k-2)\overline{h})+q^{n(k-1)(\overline{h}+j)}f(j).$
 \end{itemize}

 Let $j_i>0$ be such that $p_i|f(j_i)$ for $i=1,\dots,M$. This implies that $p_i \mid f(kj_i+(k-1)\overline{h})$ for all $k>0$.
 Note that  $kj_i+(k-1)\overline{h}=k(j_i+\overline{h})-\overline{h}$ and thus   $p_i \mid f(\prod_{i=1}^M(j_i+\overline{h})-\overline{h})$, for each $ i=1,\dots,M$. So $p_i\nmid f(\prod_{i=1}^M(j_i+\overline{h})-\overline{h}+1)$ for each $i=1,\dots,M$, a contradiction.
 \end{proof}

\begin{cor} \label{cor1}
Assume that $\gcd(I,J)=1$ and that
$K_{\textbf{A}}^{I,J}$
is not an $C_{K,m}$-power in $\mathbb{F}_{q^n}$. Then the set $U^{I,J}_{\textbf{A}}$ is exceptional scattered.
\end{cor}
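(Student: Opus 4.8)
The plan is to combine Theorem \ref{thm1} with Proposition \ref{propTN} by a standard "exceptionality = stability under field extension" argument. First I would recall that, by Theorem \ref{thm1}, the set $U^{I,J}_{\textbf{A}}$ is scattered over $\mathbb{F}_{q^n}$ whenever $\gcd(I,J)=1$ and $K_{\textbf{A}}^{I,J}$ is not a $C_{K,m}$-power in $\mathbb{F}_{q^n}$. The key observation is that Theorem \ref{thm1} is \emph{uniform in $n$}: nothing in its proof uses the specific value of $n$ beyond the ambient field being $\mathbb{F}_{q^n}$, so for any extension $\mathbb{F}_{q^{n\ell}}$ the same computation shows that $U^{I,J}_{\textbf{A}}$, now viewed inside $V(2m, q^{n\ell})$, is scattered over $\mathbb{F}_{q^{n\ell}}$ provided $\gcd(I,J)=1$ (which is untouched) and $K_{\textbf{A}}^{I,J}$ is not a $C_{K,m}$-power in $\mathbb{F}_{q^{n\ell}}$. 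So exceptionality reduces to producing infinitely many $\ell$ for which this last non-power condition survives passage to $\mathbb{F}_{q^{n\ell}}$.

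Next I would translate the non-power condition into multiplicative-order language. Write $N := C_{K,m} = (q^{Km}-1)/(q^K-1)$ and let $d$ be the order of $K_{\textbf{A}}^{I,J}$ in the cyclic group $\mathbb{F}_{q^n}^*$; by hypothesis $K_{\textbf{A}}^{I,J}$ is not an $N$-th power in $\mathbb{F}_{q^n}^*$, which is equivalent to $\gcd(N, (q^n-1)/d) \neq 1$, i.e. to the existence of a prime $p_0 \mid N$ with $p_0 \mid (q^n-1)/d$. In an extension $\mathbb{F}_{q^{n\ell}}^*$, cyclic of order $q^{n\ell}-1$, the element $K_{\textbf{A}}^{I,J}$ still has order $d$, and it fails to be an $N$-th power exactly when $\gcd\!\big(N,(q^{n\ell}-1)/d\big)\neq 1$. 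Since $(q^n-1) \mid (q^{n\ell}-1)$ and $d \mid q^n-1$, the same prime $p_0$ divides $(q^{n\ell}-1)/d$ as soon as... this is where Proposition \ref{propTN} enters: I would apply it with the roles of its parameters chosen so that $A := N$ (or, more carefully, $A$ equal to the product of those primes dividing $N$ that one needs to control) and the modulus $C_{n,\ell}$ plays against it; note $\gcd(q, N)=1$ since $N \equiv m \pmod{q^K-1}$ divides no power of $q$ — indeed $N$ is a sum of powers of $q$, coprime to $q$. Proposition \ref{propTN} then yields infinitely many $\ell$ with $\gcd(N, C_{n,\ell})=1$, and for such $\ell$ one checks that $q^{n\ell}-1 = (q^n-1)\,C_{n,\ell}$ contributes no new $N$-divisibility, so the non-power witness $p_0$ persists.

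The main obstacle I anticipate is getting the divisibility bookkeeping exactly right: one must verify that for the $\ell$ supplied by Proposition \ref{propTN} the quantity $K_{\textbf{A}}^{I,J}$ genuinely remains a non-$N$-th-power in $\mathbb{F}_{q^{n\ell}}^*$, which amounts to showing $(q^{n\ell}-1)/d$ still shares a common factor with $N$. Using $q^{n\ell}-1 = (q^n-1) C_{n,\ell}$ and $d \mid q^n - 1$, we get $(q^{n\ell}-1)/d = \big((q^n-1)/d\big)\cdot C_{n,\ell}$, and since $p_0 \mid (q^n-1)/d$ already, the conclusion is immediate and in fact does \emph{not even require} $\gcd(N, C_{n,\ell})=1$ — that coprimality is the clean way to also guarantee the order of $K_{\textbf{A}}^{I,J}$ does not interact badly, but the witness prime $p_0$ alone suffices. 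Thus the argument is: take the infinitely many $\ell$ from Proposition \ref{propTN} (applied with $A=N$), for each such $\ell$ invoke the $n$-uniform version of Theorem \ref{thm1} over $\mathbb{F}_{q^{n\ell}}$ to conclude $U^{I,J}_{\textbf{A}}$ is scattered there, and hence by Definition \ref{Def:ScatteredSequence} it is exceptional scattered. I would close by remarking that the hypotheses $\gcd(I,J)=1$ and the non-power condition are exactly the ones appearing in Theorem \ref{thm1}, so no extra work is needed beyond the order-of-extension analysis.
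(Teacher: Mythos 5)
Your overall strategy matches the paper's: invoke Proposition \ref{propTN} with $A := C_{K,m}$ (and your observation that $\gcd(q,C_{K,m})=1$ because $C_{K,m}$ is a sum of powers of $q$ with constant term $1$ is exactly what makes that proposition applicable), then argue that the non-$C_{K,m}$-power condition persists to $\mathbb{F}_{q^{n\ell}}$, and conclude by Theorem \ref{thm1}. However, the persistence step in your proposal contains a genuine error.

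The criterion you use is wrong. You assert that an element of order $d$ in $\mathbb{F}_{q^n}^*$ fails to be an $N$-th power if and only if $\gcd\bigl(N,(q^n-1)/d\bigr)\neq 1$. The correct criterion is that it fails to be an $N$-th power if and only if $\gcd(q^n-1,N)\nmid (q^n-1)/d$, and these are not equivalent. For instance, in a cyclic group of order $12$ an element of order $4$ is a cube even though $\gcd(3,12/4)=3\neq 1$, while a generator (order $12$) is not a cube even though $\gcd(3,12/12)=1$. Because of this, the existence of a prime $p_0\mid N$ with $p_0\mid (q^n-1)/d$ neither characterizes the base hypothesis nor guarantees that the non-power condition persists. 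In particular, your remark that "the witness prime $p_0$ alone suffices" and that the conclusion "does not even require $\gcd(N,C_{n,\ell})=1$" is false: if non-power-ness persisted to \emph{every} extension, Proposition \ref{propTN} would be superfluous, and one can easily find $\ell$ (with $\gcd(N,C_{n,\ell})\neq 1$) for which an element that is not an $N$-th power over $\mathbb{F}_{q^n}$ becomes one over $\mathbb{F}_{q^{n\ell}}$. The coprimality $\gcd(C_{K,m},C_{n,\ell})=1$ supplied by Proposition \ref{propTN} is essential.

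The paper sidesteps all of this order bookkeeping with a short Bézout argument that is both correct and cleaner: if $K_{\textbf{A}}^{I,J}=\xi^{C_{K,m}}$ with $\xi\in\mathbb{F}_{q^{n\ell}}\setminus\mathbb{F}_{q^n}$, then $\xi^{q^n-1}$ is killed by both $C_{K,m}$ (because $\xi^{C_{K,m}}\in\mathbb{F}_{q^n}$) and $C_{n,\ell}$ (because $\xi^{q^{n\ell}-1}=1$), so by $c_1C_{K,m}+c_2C_{n,\ell}=1$ one gets $\xi^{q^n-1}=1$, forcing $\xi\in\mathbb{F}_{q^n}$, a contradiction. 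Your argument could be repaired by replacing your criterion with the correct one: since $\gcd(C_{K,m},C_{n,\ell})=1$ gives $\gcd(q^{n\ell}-1,C_{K,m})=\gcd(q^n-1,C_{K,m})$ and this quantity is coprime to $C_{n,\ell}$, the relation $\gcd(q^n-1,C_{K,m})\nmid (q^n-1)/d$ does propagate to $\gcd(q^{n\ell}-1,C_{K,m})\nmid (q^{n\ell}-1)/d$. But as written your proof has a real gap.
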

\begin{proof}
    From the previous proposition, there exist infinitely many $h\in\mathbb{N}$ such that 
    $$\gcd(C_{K,m},C_{n,h})=1.$$ 
    Let us consider a fixed $h$ satisfying the above property. By Bézout's identity, there exist integers $c_1$ and $c_2$ such that $c_1C_{K,m}+c_2C_{n,h}=1$. Suppose by the way of contradiction that there exists  $\xi \in \mathbb{F}_{q^{hn}}\setminus\mathbb{F}_{q^n}$ such that $K_{\textbf{A}}^{I,J}=\xi^{C_{K,m}}$. So $\xi^{C_{K,m}}\in\mathbb{F}_{q^n}$, and so $1=(\xi^{C_{K,m}})^{q^n-1}=(\xi^{q^n-1})^{C_{K,m}}$.

Raising both sides to the power $c_1$, we obtain $$1=(\xi^{q^n-1})^{c_1C_{K,m}}=(\xi^{q^n-1})^{-c_2C_{n,h}}(\xi^{q^n-1})=\xi^{q^n-1},$$
a contradiction to $\xi \notin \mathbb{F}_{q^n}$. 

Therefore, there are infinitely many extensions of $\mathbb{F}_q$ such that $K_{\textbf{A}}^{I,J}$ is not a $C_{K,m}$-power, and by  Theorem \ref{thm1} the claim follows.
\end{proof}

\vspace{2mm}

The evasiveness property will be crucial to prove the indecomposability of $\us$.
\begin{definition}
Let $h,r,k,n$ be positive integers, such that $h<k$ and $h \le r$. An $\mathbb{F}_q$-subspace $U\subseteq V(k,q^n)$ is said to be {$(h,r)$-evasive} if for every $h$-dimensional $\field$-subspace  $H\subseteq V(k,q^n)$, it holds $\dim_{\mathbb{F}_q}(U\cap H)\leq r$. When $h=r$, an  $(h,h)$-evasive subspace is called {$h$-scattered}. Furthermore, when $h=1$,  a $1$-scattered subspace is simply called {scattered}.
\end{definition}
\begin{theorem} \label{thm2}
    If $n\geq 2(mJ+J+1)$ then $U^{I,J}_{\mathbf{A}}$ is $(r,\frac{rn}{2}-1)_q$-evasive for any odd $r\in[2,\dots,m]$.
\end{theorem}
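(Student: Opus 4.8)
I would prove the evasiveness statement by a dimension-counting argument on the intersection $U^{I,J}_{\mathbf{A}}\cap H$, where $H$ ranges over $r$-dimensional $\mathbb{F}_{q^n}$-subspaces of $V(m+m, q^n) = V(2m, q^n)$. Writing a generic point of $U^{I,J}_{\mathbf{A}}$ as $(x_1,\dots,x_m, f_1(\underline x),\dots, f_m(\underline x))$ with $f_i(\underline x) = x_i^{q^I} + \alpha_{i+1} x_{i+1}^{q^J}$ (indices mod $m$, with $\alpha_{m+1}=\alpha_1$), membership in an $r$-dimensional subspace $H$ is governed by a system of $2m - r$ independent $\mathbb{F}_{q^n}$-linear equations in the $2m$ coordinates. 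Substituting the defining relations of $U^{I,J}_{\mathbf{A}}$ turns these into a system of $2m-r$ linearized equations in $x_1,\dots,x_m$ of $q$-degree bounded by $q^J$ (the highest exponent appearing). The $\mathbb{F}_q$-solution space of such a system is what we must bound: we want to show it has dimension at most $\frac{rn}{2} - 1$ whenever $n \ge 2(mJ + J + 1)$.

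**Key steps.** First, I would set up the substitution carefully and observe that the relevant object is a linearized polynomial map $\mathbb{F}_{q^n}^m \to \mathbb{F}_{q^n}^{2m-r}$ whose kernel is $U^{I,J}_{\mathbf{A}}\cap H$ (as $\mathbb{F}_q$-spaces, identifying $U^{I,J}_{\mathbf{A}}$ with $\mathbb{F}_{q^n}^m$ via the projection onto the first $m$ coordinates, which is a bijection). Second, I would argue that if $\dim_{\mathbb{F}_q}(U^{I,J}_{\mathbf{A}}\cap H)$ were too large, then by a standard argument (passing to a large subspace on which enough coordinates of the equations vanish, or directly bounding via the product of $q$-degrees), we could extract a nonzero solution to a subsystem that is "almost square" — i.e., we would find, for some index, an equation forcing $y_2 = x_2^{q^I}/x_1^{q^I}$ (or an analogous ratio) to satisfy a polynomial identity of the type that appeared in the proof of Theorem~\ref{thm1}. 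The parity hypothesis ($r$ odd) should enter precisely here: with $r$ odd, the count $\frac{rn}{2} - 1$ is not an integer unless we are careful, so actually I would phrase the bound as: an $(r, \frac{rn}{2})$-evasive failure would mean some intersection has $\mathbb{F}_q$-dimension $\ge \frac{rn}{2}$, and for odd $r$ this forces a "degenerate" configuration where the system collapses — roughly, $\lfloor r/2\rfloor$ of the $f_i$-type equations must be identically satisfied, which (chaining the relations $(E_i)$ cyclically as in Theorem~\ref{thm1}) propagates to all $m$ indices and yields the forbidden $C_{K,m}$-power, contradicting the hypothesis of Theorem~\ref{thm1}/Corollary~\ref{cor1}. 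Third, I would make the degree bookkeeping precise: the condition $n \ge 2(mJ+J+1)$ is exactly what is needed so that the linearized polynomials obtained have $q$-degree (roughly $mJ$ after iterating the substitutions $m$ times, plus the extra $J$ from the final equation $(E)$, plus a unit) strictly smaller than $n$, so that a nonzero linearized polynomial of that degree cannot vanish identically on the whole space, and so that the rank-nullity estimate $\dim_{\mathbb{F}_q}\ker \ge mn - (\text{sum of }q\text{-degrees})$ can be compared against $\frac{rn}{2}$.

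**Main obstacle.** The hard part will be the combinatorial heart of the parity argument: translating "the intersection is too big" into "a cyclic block of the defining equations is forced to be trivial" and then closing the loop with the chaining computation. In Theorem~\ref{thm1} the chain $(E_3),\dots,(E_m),(E)$ closed up cleanly because we were dealing with a single scalar $\lambda$; here the equations cutting out $H$ are more flexible, so I would need to show that an over-large $\mathbb{F}_q$-intersection cannot be "spread out" among the coordinates but must concentrate so as to kill at least one full cycle of the $U^{I,J}_{\mathbf{A}}$-relations. I expect this to require a careful induction on $r$ (or on $m$), peeling off one pair of coordinates at a time and using that each $f_i$ couples only $x_i$ and $x_{i+1}$, so the "coupling graph" is a single $m$-cycle and any genuine linear dependence must wrap around it. Once that structural fact is in place, the degree estimate and the appeal to Corollary~\ref{cor1} are routine, and the numerical threshold $n \ge 2(mJ+J+1)$ falls out of the bookkeeping.
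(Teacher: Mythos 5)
Your plan misreads both the mechanism and the hypotheses of this theorem, so the argument as sketched would not go through.

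First, Theorem~\ref{thm2} carries \emph{no} hypothesis on the parameters $\alpha_1,\dots,\alpha_m$ --- there is no assumption that $K^{I,J}_{\mathbf{A}}$ fails to be a $C_{K,m}$-power. Your proposed closing move (``chaining the relations $(E_i)$ cyclically as in Theorem~\ref{thm1}\dots yields the forbidden $C_{K,m}$-power, contradicting\dots Corollary~\ref{cor1}'') therefore appeals to a hypothesis that is not available here; at best it would prove a strictly weaker statement with an extra assumption tacked on. In the paper, the $\Pi_{\delta}$-type condition on the $\alpha_i$ is only needed in the \emph{even}-$r$ case (the next theorem), precisely for the subcase your argument tries to force.

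Second, you have mislocated where the parity of $r$ enters. It has nothing to do with $\frac{rn}{2}$ being or not being an integer. Writing the $2m-r$ equations cutting out $H$ and separating them into $t$ equations involving only $z_1,\dots,z_m$ and $s$ equations involving the rest, one has $r=2m-s-t$, so $r$ odd forces $t\neq s$. That is the whole content of the parity hypothesis: it rules out the balanced case $t=s$, and makes a clean two-case counting argument possible. In Case $t>s$, the $t$ full-rank linear equations in $x_1,\dots,x_m$ alone already give $\lvert U\cap H\rvert\le q^{n(m-t)}\le q^{\frac{rn}{2}-1}$. In Case $t<s$, one treats $x_i^{q^j}$ as independent unknowns, applies Frobenius $n-J-1$ times to the $s$ semilinear equations, and uses that the block $C$ (the coefficient matrix of the $x_i^{q^J}$) has full rank to conclude that the resulting $s(n-J)\times mn$ system has full row rank; this gives $\lvert U\cap H\rvert\le q^{mn-(n-J)s}$, and the inequality $n\ge 2(mJ+J+1)$ is exactly what makes $q^{mn-(n-J)s}\le q^{\frac{rn}{2}-1}$ via the bound $t\le\frac{n/2-J-1}{J}$. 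Nothing in the proof resembles ``killing a full cycle of relations'' or propagating an identity around the $m$-cycle; that structural collapse is the hallmark of the $t=s$ case, which odd $r$ excludes. So the key ideas you would need --- the $t$ versus $s$ dichotomy, the full-rank observation about $C$, and the Frobenius-stacking bound --- are all missing from your plan.
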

\begin{proof}
Let $r\in[2,\dots,m]$, and let $H$ be a subspace of $\field^{2m}$ of dimension $r$. It can be defined by $2m-r$ equation in $z_1,\dots,z_{2m}$, let $t$ be the number of equation containing only the first $m$ variables, and let $s$ be the number of the remaining equations. Since $r$ is odd and $r=2m-s-t$, we can observe that $t\neq s$. The subspace $H$ can be described as the solution oh a homogeneous linear system whose matrix is of type
$$
    \begin{pmatrix}
    A&E\\
    D&0
\end{pmatrix},
$$
with $A,E\in M_{s,m},D\in M_{t,m}$ and $E,D$ are full rank matrix. Consider the substitution 
$$(z_1,\dots,z_{2m})\longrightarrow (x_1,\dots,x_m,f_1(x_1,\dots,x_m),\dots,f_m(x_1,\dots,x_m)).$$
We need to count how many $(x_1,\dots,x_m)\in\mathbb{F}^m_{q^n}$ satisfy the system. After the substitution we see the system in the unknowns $x_1,\dots,x_m,x_1\qi,\dots,x_m\qi,x_1\qj,\dots,x_m\qj.$ The new matrix of this system is
$$
    \begin{pmatrix}
    A&B&C\\
    D&0&0
\end{pmatrix},
$$
where $B,C\in M_{s,m}$ are the matrices associated with the unknowns $(x_1\qi,\dots,x_m\qi)$ and $(x_1\qj,\dots,x_m\qj)$. Note that $B=E$ and $C$ is obtained by shifting the columns of $E$ to the right by one position and then multiplying each $\ell$-th column by $\alpha_{\ell}$ for each $\ell\in [1,...,m]$.

We consider two cases.

\textbf{Case 1.} ${t>s}$

Considering the $t$ linear equations in $x_1\dots x_m$, they, since the matrix $D$ is full-rank, we can bound the total number of solutions by $q^{n(m-t)}$. 

We have \begin{eqnarray*}
    q^{n(m-t)}= q^{\frac{n}{2}(2m-2t)}\leq q^{\frac{n}{2}(2m-s-t)-1}=q^{\frac{rn}{2}-1}.
\end{eqnarray*}

\textbf{Case 2.} ${t<s}$

We focus on the $s$ semilinear equations in $x_1\dots x_m$. We consider the unknowns as $x_{ij}=x_i^{q^j}$. We can apply $n-J-1$ Frobenius, that is, raising to the power of $q$, to the $s$ equations and obtain a system of $s(n-J)$ equations with unknowns $x_{ij}$ and matrix 
$$\overline{M}:=
    \begin{pmatrix}
    A&B&C&0&0&\cdots&0\\
    0&A^q&B^q&C^q&0&\cdots&0\\
    \vdots&\vdots&\vdots&\vdots&\vdots&\vdots&\vdots&\\
    0&0&\cdots&0&A^{q^{n-J-1}}&B^{q^{n-J-1}}&C^{q^{n-J-1}}
\end{pmatrix},
$$
where $A^{q}$ means the matrix having as entries the $q$
 powers of the entries in $A$.

 Since $C$ is a full-rank matrix, $\overline{M}$ is of full rank as well. The total number of solutions $(\overline{x}_{11},\dots,\overline{x}_{mn})$ is $q^{mn-(n-J)s}$. 
 
 Note that based on the assumption on $n$, we have 
 \begin{equation*}
     m\leq\frac{\frac{n}{2}-J-1}{J},
 \end{equation*}
 hence, since $2m\geq s+t$ and $s>t$ we have $m>t$, so 
 \begin{equation}\label{eqt}
     t\leq\frac{\frac{n}{2}-J-1}{J}.
 \end{equation}
 We obtain
 \begin{eqnarray*}
     q^{mn-(n-J)s}&=&q^{\frac{n}{2} (2m-2s)+Js}=q^{\frac{n}{2} (2m-s-t)-1+{n \over 2}(t-s)+Js+1}\\
     &=&q^{\frac{n}{2} (2m-s-t)-1}q^{s(J-\frac{n}{2})+{n\over2}t+1}\\
     &\leq&q^{\frac{n}{2} (2m-s-t)-1}q^{(t+1)(J-\frac{n}{2})+{n\over2}t+1}\\
     &\leq&q^{\frac{n}{2} (2m-s-t)-1},
 \end{eqnarray*}
 since $(t+1)(J-\frac{n}{2})+{n\over2}t+1\leq 0$ from (\ref{eqt}).
\end{proof}

\begin{theorem}
   If $n\geq 2(mJ+J+1)$ and $\frac{\Pi_{\delta+2}}{\Pi_{2}} \textrm{ is not a $(q^{mK}-1)$-power in $\field$}$  for any $\delta=1,\dots,m-1$, where $\Pi_{i}=\alpha_{i}^{q^{(m-1)K}}\alpha_{i-1}^{q^{(m-2)K}} \cdots \alpha_{i+2}^{q^K}  \alpha_{i+1}$, where the indices of $\alpha_i$ are modulo m in the range $[1,\dots,m]$, then $U^{I,J}_{\mathbf{A}}$ is $(r,\frac{rn}{2}-1)_q$-evasive for any even $r\in[2,\dots,m]$. 
\end{theorem}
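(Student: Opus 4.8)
The plan is to follow the proof of Theorem~\ref{thm2} and to isolate the single new configuration that the parity of $r$ produces. Let $H$ be an $r$-dimensional $\field$-subspace of $\field^{2m}$, and write its defining homogeneous system, exactly as in the proof of Theorem~\ref{thm2}, with matrix $\begin{pmatrix}A&E\\ D&0\end{pmatrix}$, where $D\in M_{t,m}$ gathers the equations in the first $m$ variables only, $A,E\in M_{s,m}$ the others, $D$ and $E$ have full rank, and $r=2m-s-t$. Since $r$ is even, $s\equiv t\pmod2$. If $t>s$, then $s<t$ and the bound $q^{n(m-t)}\le q^{\frac{rn}{2}-1}$ of Case~1 in the proof of Theorem~\ref{thm2} still goes through (one only needs $s<t$ and $n\ge2$); if $t<s$, then $s>t$ and Case~2 there applies verbatim, since (\ref{eqt}) requires only $t<m$, which holds. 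So the only genuinely new case is $t=s$, in which $r=2(m-t)$ and $d:=m-t=r/2$ satisfies $1\le d\le\lfloor m/2\rfloor<m$.

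Put $W:=\ker D$, an $\field$-subspace of $\field^m$ of dimension $d$, and let $Q_{\mathbf A}$ be the invertible $m\times m$ monomial matrix with $(Q_{\mathbf A})_{i,i+1}=\alpha_{i+1}$ (indices mod $m$) and zeros elsewhere, so that $\underline f(\underline x)=\underline{x}^{q^I}+Q_{\mathbf A}\underline{x}^{q^J}$ and, in the notation of Theorem~\ref{thm2}, $C=EQ_{\mathbf A}$. Then $(\underline x,\underline f(\underline x))\in H$ precisely when $\underline x\in W$ and $A\underline x+E\underline{x}^{q^I}+EQ_{\mathbf A}\underline{x}^{q^J}=0$. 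If these $t$ semilinear equations do not all vanish identically on $W$, then $\us\cap H$ is a proper $\mathbb F_q$-subspace of $\{(\underline x,\underline f(\underline x)):\underline x\in W\}$, so $\dim_{\mathbb F_q}(\us\cap H)\le dn-1=\frac{rn}{2}-1$ and we are done. Hence I would argue by contradiction, assuming all $t$ equations vanish on $W$. Parametrising $W$ by a rank-$d$ matrix $R$ and using that a $q$-linearized polynomial in $d$ variables with $q$-exponents among $\{0,I,J\}$ vanishing on $\field^d$ has all coefficients zero, one obtains $AR=ER^{q^I}=EQ_{\mathbf A}R^{q^J}=0$. Comparing dimensions ($\dim\ker E=d$ since $E$ has rank $t$, and $Q_{\mathbf A}$ is invertible), this forces $W^{q^I}=\ker E=Q_{\mathbf A}W^{q^J}$, i.e.
\[
Q_{\mathbf A}\,W^{q^J}=W^{q^I}.
\]

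It remains to rule out such a $W$ with $1\le d<m$, which is the crux. This identity says exactly that $W$ is invariant under the bijective $q^{-K}$-semilinear map $\Phi$ of $\field^m$ given by $\Phi(\underline\xi)=(Q_{\mathbf A}^{-1}\underline{\xi}^{q^I})^{q^{-J}}$, which on the coordinate lines acts as $\Phi(e_i)=\alpha_{i+1}^{-q^{-J}}e_{i+1}$ and so cyclically permutes $\langle e_1\rangle,\dots,\langle e_m\rangle$. Iterating $m$ times makes $\Phi^m$ diagonal, $\Phi^m(e_i)=c_i\,e_i$ with $c_i$ a fixed Frobenius twist of $\Pi_i^{-1}$ (explicitly $c_i=\Pi_i^{-q^{-(m-1)K-J}}$); raising further to the power $n/\gcd(n,mK)$ turns $\Phi^m$ into an honest $\field$-linear diagonal map $L$ whose eigenvalue $\Lambda_i$ on $e_i$ is a fixed automorphic image of $\mathrm N_{q^n/q^{g}}(\Pi_i)^{-1}$, where $g=\gcd(n,mK)$. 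Hence $\Lambda_i=\Lambda_j$ if and only if $\Pi_i/\Pi_j$ is a $(q^{g}-1)$-th power — equivalently a $(q^{mK}-1)$-th power — in $\field^*$, the two subgroups of $\field^*$ coinciding because $\gcd(q^n-1,q^{mK}-1)=q^{g}-1$. By hypothesis $\Pi_{\delta+2}/\Pi_2$ is not a $(q^{mK}-1)$-th power for $\delta=1,\dots,m-1$, i.e. $\Lambda_2\ne\Lambda_j$ for all $j\ne2$, so $\Lambda_2$ is a simple eigenvalue of $L$ and its eigenspace is the single line $\langle e_2\rangle$. Since $W$ is $L$-invariant and $L$ is diagonalisable, $W$ decomposes along the eigenspaces; therefore either $e_2\in W$ or $W\cap\langle e_2\rangle=0$. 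In the first case the $\Phi$-orbit of $e_2$ lies in $W$ and spans $\field^m$, forcing $W=\field^m$, against $d<m$. In the second case $W\subseteq\langle e_i:i\ne2\rangle$; applying $\Phi$ to this inclusion and using $\Phi(W)=W$ repeatedly removes one more coordinate each time, until $W\subseteq\{0\}$, against $d\ge1$. Both cases are impossible, which closes the case $t=s$ and proves the theorem.

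\noindent\emph{Main obstacle.} The delicate step is the last one: manufacturing the honest linear map $L$ from the semilinear $\Phi$, computing its eigenvalues in terms of the norms $\mathrm N_{q^n/q^{\gcd(n,mK)}}(\Pi_i)$, and matching the ``$(q^{mK}-1)$-th power'' hypothesis to the statement that $\Lambda_2$ is a simple eigenvalue, through $\gcd(q^n-1,q^{mK}-1)=q^{\gcd(n,mK)}-1$. The remaining ingredients are the bookkeeping already carried out in Theorem~\ref{thm2} (Cases~1--2 and the inequality (\ref{eqt})) and the elementary fact that an invariant subspace of a diagonalisable operator is the sum of its intersections with the eigenspaces.
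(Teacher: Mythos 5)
Your proof is correct, and it handles the case $t\ne s$ exactly as the paper does (Cases~1 and~2 of Theorem~\ref{thm2}), but in the crucial case $t=s$ you take a genuinely different route. The paper stays at the level of matrix columns: writing $B=(H_1\mid\cdots\mid H_m)$ and $C=(\alpha_1H_m\mid\cdots\mid\alpha_mH_{m-1})$, it posits $LB^{q^K}=C$ for some $L\in\GL(s,q^n)$, iterates to obtain the columnwise relation $H_i^{q^{mK}}=\Pi_i\overline L H_i$, chooses $s$ independent columns $H_{i_1},\dots,H_{i_s}$ together with a further nonzero column $H_{i_{s+1}}=\sum\lambda_jH_{i_j}$, and compares coefficients to force $\lambda_j^{q^{mK}-1}=\Pi_{i_{s+1}}/\Pi_{i_j}$ for some $\lambda_j\ne0$, contradicting the hypothesis after normalising the ratio to the form $\Pi_{\delta+2}/\Pi_2$ via $\Pi_\delta^{q^{\gamma K}}=\xi^{q^{mK}-1}\Pi_{\delta+\gamma}$. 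You instead pass to kernels: the degree argument gives $W^{q^I}=\ker E=Q_{\mathbf A}W^{q^J}$ (equivalent to the paper's rowspan condition), which you read as $\Phi$-invariance of a proper nonzero $\field$-subspace $W$ under the cyclic $q^{-K}$-semilinear monomial map $\Phi$. You then linearise $\Phi^m$ by raising to the power $n/\gcd(n,mK)$, identify its eigenvalue on $e_i$ as a fixed Frobenius twist of $\mathrm N_{q^n/q^{\gcd(n,mK)}}(\Pi_i)^{-1}$, and invoke $\gcd(q^n-1,q^{mK}-1)=q^{\gcd(n,mK)}-1$ to translate the hypothesis into ``$\Lambda_2$ is a simple eigenvalue.'' The eigenspace decomposition of the $L$-invariant subspace $W$ then either contains $e_2$ (whence the $\Phi$-orbit fills $\field^m$) or misses $\langle e_2\rangle$ (whence the $\Phi$-orbit of the complementary coordinate hyperplane empties $W$); both contradict $1\le d<m$. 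Your argument is more structural and makes transparent why $\Pi_2$ is singled out in the hypothesis, while the paper's is shorter and avoids the semilinear-to-linear passage and the norm computation; but both hinge on the same power-subgroup identity for $\field^*$, and both are valid.
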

\begin{proof}
The first part of the proof is the same as in the previous one; for even $r$, we also need to consider the case where $s=t$.
Considering the $s$ linear equations defined by $D$, we deduce the bound $q^{n(m-s)}$ for the number of solutions $(\overline{x}_1,\dots,\overline{x}_m)$. This means that it is sufficient to prove that the space spanned by the rows of the matrix $D|0|0$ cannot be the same as the one defined by the matrix $A|B|C$. 
Notice that if $A$ and $D$ were not proportional, we would have concluded, so we can consider $A=0$.

Let
$$B=\begin{pmatrix}
H_1&H_2&\ldots&H_m
\end{pmatrix} \qquad 
C= \begin{pmatrix}
\alpha_1 H_m&\alpha_2 H_1&\ldots&\alpha_mH_{m-1}
\end{pmatrix}
$$
constructed as described in Theorem \ref{thm2}. Consider 
$$B (x_1^{q^I},\ldots,x_m^{q^I})^T+C (x_1^{q^J},\ldots,x_m^{q^J})^T=(0,\ldots,0)^T=D(x_1,\ldots,x_m)^T,$$.

If the above system has as solutions $D(x_1,\ldots,x_m)^T=(0,\ldots,0)^T$ then the rowspan of $B^{q^K}$ equals the rowspan of $C$.

This yields the existence of $L \in \GL(s,q^n)$ such that 
$$LB^{q^K}=C,$$
that is 
$$L H_1^{q^K}=\alpha_1 H_m, \; L H_2^{q^K}=\alpha_2 H_1, \ldots, \; L H_{m-1}^{q^K}=\alpha_{m-1} H_{m-2},\; L H_m^{q^K}=\alpha_m H_{m-1},$$
i.e.
$$H_1^{q^K}=\alpha_1 L^{-1}H_m, \; H_2^{q^K}=\alpha_2 L^{-1} H_1, \ldots, \; H_{m-1}^{q^K}=\alpha_{m-1} L^{-1} H_{m-2},\; H_m^{q^K}=\alpha_mL^{-1} H_{m-1}.$$     

Consider $i=\{1..,m\}$.  
\begin{eqnarray*}
H_i^{q^{mK}}&=&\alpha_{i}^{q^{(m-1)K}} (L^{-1})^{q^{(m-1)K}}H_{i-1}^{q^{(m-1)K}}\\&=&\alpha_{i}^{q^{(m-1)K}} (L^{-1})^{q^{(m-1)K}}\alpha_{i-1}^{q^{(m-2)K}} (L^{-1})^{q^{(m-2)K}}H_{i-2}^{q^{(m-2)K}}\\
&=&\alpha_{i}^{q^{(m-1)K}}\alpha_{i-1}^{q^{(m-2)K}} \cdots \alpha_{i+2}^{q^K} (L^{-1})^{q^{(m-1)K}} (L^{-1})^{q^{(m-2)K}} \cdots (L^{-1})^{q^{K}} H_{i+1}^{q^{K}}\\
&=&\alpha_{i}^{q^{(m-1)K}}\alpha_{i-1}^{q^{(m-2)K}} \cdots \alpha_{i+2}^{q^K}  \alpha_{i+1} (L^{-1})^{q^{(m-1)K}} (L^{-1})^{q^{(m-2)K}} \cdots (L^{-1})^{q^{K}}(L^{-1}) H_{i}.
\end{eqnarray*}
Write the above equation as 
$$H_i^{q^{mK}}=\Pi_{i} \overline{L} H_i,$$
where $\overline{L}=(L^{-1})^{q^{(m-1)K}} (L^{-1})^{q^{(m-2)K}} \cdots (L^{-1})^{q^{K}}(L^{-1})$.

Since the rank of $B$ is $s$ and $s<m$ there exist 
linearly independent columns $H_{i_1},\ldots, H_{i_s}$ and write $H_{i_{s+1}}$  as 
$$H_{i_{s+1}}=\lambda_1 H_{i_1}+\cdots+ \lambda_sH_{i_s},$$
for some $\lambda_i \in \mathbb{F}_{q^n}$ not all vanishing (since $H_{i_{s+1}}$ being the zero column yields $B=0$, a contradiction).

Thus 

\begin{eqnarray*}
(\lambda_1 H_{i_1}+\cdots+ \lambda_sH_{i_s})^{q^{mK}}&=&\Pi_{i_{s+1}} \overline{L} (H_{i_{s+1}})=\Pi_{i_{s+1}} \overline{L} (\lambda_1 H_{i_1}+\cdots+ \lambda_sH_{i_s})\\
&=&\Pi_{i_{s+1}} (\lambda_1 H_{i_1}^{q^{mK}}/\Pi_{i_1}+\cdots+ \lambda_s  H_{i_s}^{q^{mK}}/\Pi_{i_s}).
\end{eqnarray*}
Since $H_{i_1}^{q^{mK}},\ldots, H_{i_s}^{q^{mK}}$ are also independent,
$$\lambda_j^{q^{mK}}=\frac{\Pi_{i_{s+1}}}{\Pi_{i_j}}\lambda_j, \qquad \forall j=1,\ldots,s.$$

This contradicts that $\frac{\Pi_{\delta+2}}{\Pi_{2}}$ is not a $(q^{mK}-1)$-power for any $\delta= {1,\ldots,m}$, since $\Pi_{\delta}^{q^{\gamma K}}=(\xi_{\delta,\gamma})^{q^{mK}-1}\Pi_{\delta+\gamma}$ for some $\xi_{\delta,\gamma}\in\field$, so the claim follows.
\end{proof}

\begin{cor}\label{Cor:Finale}
  If $n\geq 2(mJ+J+1)$, and $\frac{\Pi_{\delta+2}}{\Pi_{2}}$ is not a $(q^{mK}-1)$-power in $\mathbb{F}_{q^n}$ for any $\delta=1,\dots,m-1$, then $\us$ is indecomposable.   
\end{cor}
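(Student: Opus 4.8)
The plan is to deduce indecomposability from the evasiveness estimates already proved, via the elementary observation that if a maximum scattered subspace splits as a direct sum, then one of the two summands meets an $\mathbb{F}_{q^n}$-subspace of suitably small dimension in an $\mathbb{F}_q$-dimension one unit above the evasiveness bound at that level. Writing $U:=\us$, I would first record that the parametrisation $(x_1,\dots,x_m)\mapsto(x_1,\dots,x_m,f_1(\underline{x}),\dots,f_m(\underline{x}))$ is injective, so $\dim_{\mathbb{F}_q}U=mn$ and $U$ is a maximum scattered subspace of $V(2m,q^n)$; we also use that $U$ is scattered, cf.\ Theorem~\ref{thm1}.

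Next I would assume, for a contradiction, that $U$ is decomposable. Then, possibly after relabelling the $2m$ coordinates — which does not affect evasiveness — we may write $U=U_1\oplus U_2$, with $U_1,U_2$ contained in complementary $\mathbb{F}_{q^n}$-subspaces $W_1,W_2$ of $V(2m,q^n)$, where $W_i$ carries the coordinates used by the $i$-th factor, of order $m_i\ge1$ and $m_1+m_2=m$. I would then pin down all relevant dimensions. Each $U_i$ is the image of an $\mathbb{F}_q$-linear map from $(\mathbb{F}_{q^n})^{m_i}$, hence $\dim_{\mathbb{F}_q}U_i\le m_in$; together with $\dim_{\mathbb{F}_q}U_1+\dim_{\mathbb{F}_q}U_2=\dim_{\mathbb{F}_q}U=mn$ this forces $\dim_{\mathbb{F}_q}U_i=m_in$ for $i=1,2$. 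Since $U_i$ is scattered (being a subspace of the scattered $U$) of this $\mathbb{F}_q$-dimension, the standard dimension bound for scattered subspaces gives $\dim_{\mathbb{F}_{q^n}}W_i\ge 2m_i$; as $\dim_{\mathbb{F}_{q^n}}W_1+\dim_{\mathbb{F}_{q^n}}W_2=2m$ we conclude $\dim_{\mathbb{F}_{q^n}}W_i=2m_i$, so each $U_i$ is a maximum scattered subspace of $W_i$. Relabelling if necessary so that $m_1\le m_2$, put $r:=2m_1$; then $r$ is an \emph{even} integer with $2\le r\le m$.

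Finally I would invoke the evasiveness of $U$ at level $r$: for even $r\in\{2,\dots,m\}$ this is exactly the theorem immediately preceding the present corollary, and it applies here because its hypotheses — namely $n\ge 2(mJ+J+1)$, together with the requirement that $\Pi_{\delta+2}/\Pi_{2}$ not be a $(q^{mK}-1)$-power for $\delta=1,\dots,m-1$ — coincide with the hypotheses we are assuming. Taking the $\mathbb{F}_{q^n}$-subspace $H:=W_1$, which has $\mathbb{F}_{q^n}$-dimension exactly $r$, and writing any $u\in U\cap W_1$ as $u=u_1+u_2$ with $u_1\in U_1\subseteq W_1$ and $u_2\in U_2\subseteq W_2$, we get $u_2=u-u_1\in W_1\cap W_2=\{0\}$; hence $U\cap W_1=U_1$ and
\[\dim_{\mathbb{F}_q}(U\cap H)=\dim_{\mathbb{F}_q}U_1=m_1n=\frac{rn}{2}>\frac{rn}{2}-1,\]
contradicting the $(r,\frac{rn}{2}-1)_q$-evasiveness of $U$. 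Therefore $U$ admits no such decomposition, i.e.\ $\us$ is indecomposable.

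The step I expect to require the most care is the dimension bookkeeping of the second paragraph: one must be certain that the two factors of a hypothetical decomposition are maximum scattered subspaces of $\mathbb{F}_{q^n}$-spaces of dimensions $2m_1$ and $2m_2$ with $m_1+m_2=m$, since this is precisely what forces the critical intersection dimension $m_1n$ to be tested against the evasiveness threshold at an \emph{even} level $r=2m_1$ lying in $\{2,\dots,m\}$ — the range covered by the two evasiveness theorems. (In particular only the even-order theorem is used; the odd-order Theorem~\ref{thm2} plays no role here, since $r=2m_1$ is automatically even.) Once the dimensions are correctly matched, the contradiction is immediate.
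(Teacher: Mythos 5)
Your argument is essentially a reconstruction of \cite[Lemma 3.4]{bartoli2022exceptional}, which the paper invokes as a black box; the route is the same, namely that a decomposition into scattered-sequence factors forces a coordinate $\mathbb{F}_{q^n}$-subspace of even dimension $r=2m_1\le m$ meeting $\us$ in $\mathbb{F}_q$-dimension $rn/2$, contradicting the $(r,\tfrac{rn}{2}-1)_q$-evasiveness established in the even-$r$ theorem. Your bookkeeping of $m_1+m_2=m$, $\dim_{\mathbb{F}_q}U_i=m_in$, $\dim_{\mathbb{F}_{q^n}}W_i=2m_i$, and the reduction to an even level $r=2m_1\in\{2,\dots,m\}$ is correct, and the final intersection calculation $U\cap W_1\supseteq U_1$ does the job.

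One blemish worth flagging: you appeal to Theorem~\ref{thm1} to assert that $U$ itself is scattered, and then deduce that $U_i$ is scattered as a subspace of $U$. But Corollary~\ref{Cor:Finale} does not assume $\gcd(I,J)=1$ or that $K_{\mathbf{A}}^{I,J}$ fails to be a $C_{K,m}$-power, so Theorem~\ref{thm1} is not available here. Fortunately the step is unnecessary: by the definition of decomposability, a hypothetical decomposition $\us=U_{\mathcal I,\mathcal F}\oplus U_{\mathcal J,\mathcal G}$ has as factors the $\mathcal I$-spaces of scattered sequences, which are \emph{maximum} scattered by definition; this directly gives both the scatteredness of $U_i$ and the equality $\dim_{\mathbb{F}_{q^n}}W_i=2m_i$ without passing through any property of $U$. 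Replacing that one sentence with this remark makes the argument self-contained and matches the hypotheses of the corollary exactly.
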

\begin{proof}
    It follows from \cite[Lemma 3.4]{bartoli2022exceptional}.
\end{proof}
  We conclude this section with our main result. 
\begin{theorem}\label{maint}
     Assume that $\gcd(I,J)=1$, $K_{\textbf{A}}^{I,J}$ is not a $C_{K,m}$-power, and $\frac{\Pi_{\delta+2}}{\Pi_{2}}$ is not a $(q^{mK}-1)$-power in $\mathbb{F}_{q^n}$ for any $\delta=1,\dots,m-1$. Then $\us$ is scattered and indecomposable in infinitely many extensions of $\field$.
\end{theorem}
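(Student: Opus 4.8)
The plan is to assemble the three previously established facts into the final statement essentially by matching their hypotheses against an infinite family of extensions. The key observation is that the condition "$\gcd(I,J)=1$" is independent of the field, while the two non-power conditions — that $K_{\mathbf{A}}^{I,J}$ is not a $C_{K,m}$-power and that $\Pi_{\delta+2}/\Pi_2$ is not a $(q^{mK}-1)$-power — as well as the numeric condition "$n'\geq 2(mJ+J+1)$" need to survive when passing from $\mathbb{F}_{q^n}$ to an extension $\mathbb{F}_{q^{n\ell}}$. So the first step is to identify which extensions $\mathbb{F}_{q^{n\ell}}$ are admissible.

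Concretely, I would proceed as follows. First, invoke Proposition~\ref{propTN} twice: once with $A = C_{K,m}$ (noting $\gcd(q,C_{K,m})=1$) and once with $A = q^{mK}-1$ (again coprime to $q$), to produce infinitely many exponents $\ell$ with $\gcd(C_{K,m}, C_{n,\ell})=1$ and $\gcd(q^{mK}-1, C_{n,\ell})=1$ simultaneously — intersecting two infinite sets of the stated form, which is itself infinite by the same Bézout trick used in Corollary~\ref{cor1}. Restricting further to $\ell$ large enough that $n\ell \geq 2(mJ+J+1)$ still leaves infinitely many $\ell$. For each such $\ell$, I would then run the descent argument of Corollary~\ref{cor1} verbatim: if $K_{\mathbf{A}}^{I,J}$ were a $C_{K,m}$-power in $\mathbb{F}_{q^{n\ell}}$, say $=\xi^{C_{K,m}}$ with $\xi\in\mathbb{F}_{q^{n\ell}}$, then since $K_{\mathbf{A}}^{I,J}\in\mathbb{F}_{q^n}$ we get $(\xi^{q^n-1})^{C_{K,m}}=1$, and using $c_1 C_{K,m}+c_2 C_{n,\ell}=1$ together with $\xi^{q^n-1}$ having order dividing $C_{n,\ell}$ forces $\xi^{q^n-1}=1$, contradicting that $\xi$ generates a proper extension. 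An identical argument with $q^{mK}-1$ in place of $C_{K,m}$ shows $\Pi_{\delta+2}/\Pi_2$ remains a non-$(q^{mK}-1)$-power in $\mathbb{F}_{q^{n\ell}}$ for each $\delta$. Hence all hypotheses of Theorem~\ref{thm1} and of Corollary~\ref{Cor:Finale} hold over $\mathbb{F}_{q^{n\ell}}$.

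With that in hand, the conclusion is immediate: for each admissible $\ell$, Theorem~\ref{thm1} (applied over $\mathbb{F}_{q^{n\ell}}$) gives that $U^{I,J}_{\mathbf{A}}$ is scattered in $V(2m, q^{n\ell})$, and Corollary~\ref{Cor:Finale} (applied over $\mathbb{F}_{q^{n\ell}}$, whose evasiveness hypothesis $n\ell\geq 2(mJ+J+1)$ and non-power condition we have just verified) gives that it is indecomposable there. Since this holds for infinitely many $\ell$, $U^{I,J}_{\mathbf{A}}$ is scattered and indecomposable in infinitely many extensions of $\mathbb{F}_{q^n}$, which is exactly the assertion — and, by Definition~\ref{Def:ScatteredSequence}, says that the associated sequence is exceptional and indecomposable.

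The step I expect to require the most care is the simultaneous coprimality: making sure that the two applications of Proposition~\ref{propTN} can be combined, i.e. that $\{\ell : \gcd(C_{K,m},C_{n,\ell})=1\}$ and $\{\ell : \gcd(q^{mK}-1,C_{n,\ell})=1\}$ have infinite intersection that moreover contains arbitrarily large $\ell$. The cleanest fix is to apply Proposition~\ref{propTN} a single time with $A = C_{K,m}\cdot(q^{mK}-1)$ (after discarding any common factor with $q$ — there is none), since $\gcd(C_{K,m}(q^{mK}-1), C_{n,\ell})=1$ forces coprimality with each factor separately; then throw away the finitely many small $\ell$. A secondary subtlety is bookkeeping the indices of $\alpha_i$ and $\Pi_i$ modulo $m$ and confirming that the quantity $K_{\mathbf{A}}^{I,J}$, defined in $\mathbb{F}_{q^n}$, is literally the same field element whose non-power status we track along the tower — but this is routine since it lies in the base field. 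No genuinely new idea beyond these two bookkeeping points is needed; the theorem is a packaging of Theorem~\ref{thm1}, Corollary~\ref{Cor:Finale}, and Proposition~\ref{propTN}.
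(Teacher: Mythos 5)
Your proposal is correct and essentially matches the paper's proof: apply Proposition~\ref{propTN} to guarantee coprimality over infinitely many extensions, descend the two non-power conditions via the Corollary~\ref{cor1} argument, discard the finitely many small $\ell$ that violate $n\ell\geq 2(mJ+J+1)$, and conclude via Theorem~\ref{thm1} and Corollary~\ref{Cor:Finale}. The only cosmetic difference is that the paper invokes Proposition~\ref{propTN} a single time with $A=q^{mK}-1$ and notes that $C_{K,m}=(q^{mK}-1)/(q^K-1)$ divides $q^{mK}-1$, so one application already gives both coprimalities, whereas you either apply it twice or use the product $A=C_{K,m}(q^{mK}-1)$ — both valid, the paper's observation just a little cleaner.
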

     \begin{proof} By Proposition \ref{propTN} there exists a sequence of positive integers $(h_k)_k$ such that $\gcd(q^{mK}-1,C_{n,h_k})=1$. This implies $\gcd(C_{K.m},C_{n,h_k})=1$, so by Corollary \ref{cor1}, $\us$ is scattered in 
     $\mathbb{F}_{q^{nh_k}}$ for every $k$. With similar calculations, we obtain that $\frac{\Pi_{\delta+2}}{\Pi_{2}}$ is not a $(q^{mK}-1)$-power in $\mathbb{F}_{q^{nh_k}}$ for every $k$ and $\delta$.
     
     Also, there exists an 
     $h_{k_0}$ such that 
     $$nh_{k_0}\geq 2(mJ+J+1)$$ and, by Corollary \ref{Cor:Finale}, $\us$ is indecomposable in every extension $\mathbb{F}_{q^{nh_k}}$ with $h_k\ge h_{k_0}$.
     \end{proof}
\begin{oss}
   If $\frac{\Pi_{\delta+2}}{\Pi_{2}}$ is not a $(q^{mK}-1)$-power in $\mathbb{F}_{q^n}$ for any $\delta=1,\dots,m-1$ then $m|n$.
\end{oss}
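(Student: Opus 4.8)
The plan is to distill from the $\alpha_i$'s one multiplicative relation and then turn the whole statement into elementary arithmetic of orders. First I would record the identity
\[
\Pi_{i+1}^{\,q^{K}}=\alpha_{i+1}^{\,q^{mK}-1}\,\Pi_i\qquad\text{for all }i\ (\text{mod }m),
\]
which is the $\gamma=1$ instance of the relation $\Pi_{\delta}^{q^{\gamma K}}=(\xi_{\delta,\gamma})^{q^{mK}-1}\Pi_{\delta+\gamma}$ used above; it is proved by comparing, for each $\ell$, the power with which $\alpha_\ell$ occurs on the two sides: writing $\Pi_i=\prod_{c=0}^{m-1}\alpha_{i-c}^{\,q^{(m-1-c)K}}$ (indices mod $m$), raising $\Pi_{i+1}$ to the $q^{K}$ shifts the exponents of $\alpha_i,\alpha_{i-1},\dots,\alpha_{i+2}$ exactly onto those in $\Pi_i$, so they cancel, while $\alpha_{i+1}$ is left with exponent $q^{mK}-q^{0}=q^{mK}-1$. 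Since $\alpha_{i+1}^{q^{mK}-1}$ is a $(q^{mK}-1)$-power, this says that in the quotient group $Q:=\field^{*}/W$, where $W$ is the subgroup of $(q^{mK}-1)$-powers of $\field^{*}$, one has $\overline{\Pi}_{i+1}^{\,q^{K}}=\overline{\Pi}_i$, the bar denoting the class in $Q$.

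Next I would use that $\field^{*}$ is cyclic of order $q^{n}-1$, so $Q$ is cyclic of order $d:=\gcd(q^{mK}-1,q^{n}-1)$, with $\gcd(q,d)=1$ and $d\mid q^{n}-1$. Put $z:=\overline{\Pi}_2$ and $e:=\mathrm{ord}(z)$ in $Q$, so $e\mid d$, hence $e\mid q^{n}-1$ and $\gcd(q,e)=1$. Iterating the identity gives $z=\overline{\Pi}_{2+c}^{\,q^{cK}}$ for every $c\ge0$; since $\gcd(q^{cK},d)=1$, the map $x\mapsto x^{q^{cK}}$ permutes the cyclic group $Q$, so $\overline{\Pi}_{2+c}$ is the unique $q^{cK}$-th root of $z$, namely a power of $z$, whence
\[
\overline{\Pi}_{2+c}=z\ \Longleftrightarrow\ q^{cK}\equiv1\pmod e\ \Longleftrightarrow\ \mathrm{ord}_e(q^{K})\mid c .
\]
Taking $c=m$ and using $\Pi_{2+m}=\Pi_2$ yields $\mathrm{ord}_e(q^{K})\mid m$.

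To conclude: the hypothesis of the Observation is exactly that $\Pi_{\delta+2}/\Pi_2\notin W$, i.e.\ $\overline{\Pi}_{2+\delta}\ne z$, for every $\delta\in\{1,\dots,m-1\}$, which by the displayed equivalence means $\mathrm{ord}_e(q^{K})\nmid\delta$ for all such $\delta$; together with $\mathrm{ord}_e(q^{K})\mid m$ this forces $\mathrm{ord}_e(q^{K})=m$. Writing $t:=\mathrm{ord}_e(q)$ (defined since $\gcd(q,e)=1$), the standard identity $\mathrm{ord}_e(q^{K})=t/\gcd(t,K)$ gives $m=t/\gcd(t,K)$, hence $m\mid t$, while $e\mid q^{n}-1$ gives $t\mid n$; therefore $m\mid t\mid n$, i.e.\ $m\mid n$. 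The only computational point is the exponent bookkeeping behind the identity $\Pi_{i+1}^{q^{K}}=\alpha_{i+1}^{q^{mK}-1}\Pi_i$, which is routine, so I do not expect a genuine obstacle; the one place to be careful is stating the cyclic-group equivalence $\overline{\Pi}_{2+c}=z\Leftrightarrow\mathrm{ord}_e(q^{K})\mid c$ precisely.
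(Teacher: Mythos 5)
Your proof is correct, and it takes a genuinely different (and, I would say, cleaner) route than the paper's. Both arguments ultimately rest on the same recurrence $\Pi_{i+1}^{q^{K}}=\alpha_{i+1}^{q^{mK}-1}\Pi_i$ (the paper records the equivalent relation $\Pi_{\delta}^{q^{\gamma K}}=(\xi_{\delta,\gamma})^{q^{mK}-1}\Pi_{\delta+\gamma}$ at the end of the preceding theorem), but from that point on the mechanisms diverge. The paper singles out the fixed element $\Delta=\Pi_2$, rewrites the hypothesis as the statement that $\Delta^{q^{jK}-1}$ is never a $(q^{mK}-1)$-power for $j=1,\dots,m-1$, uses the symmetry $j\leftrightarrow m-j$ to restrict to $j\le\lceil(m-1)/2\rceil$, sets $d=\gcd(m,n)$, and runs an explicit computation with $\gcd(q^{a}-1,q^{b}-1)=q^{\gcd(a,b)}-1$ to show that $\Delta^{q^{dK}-1}$ is forced to be a $(q^{mK}-1)$-power whenever $d<m$, a contradiction. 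You instead pass to the quotient $Q=\mathbb{F}_{q^n}^{*}/W$, observe that $Q$ is cyclic of order $\gcd(q^{mK}-1,q^{n}-1)$ on which $x\mapsto x^{q^{K}}$ acts by a permutation, interpret the hypothesis as the statement $\mathrm{ord}_e(q^{K})\nmid\delta$ for $\delta<m$, combine it with the periodicity $\Pi_{2+m}=\Pi_2$ to get $\mathrm{ord}_e(q^{K})=m$, and then deduce $m\mid\mathrm{ord}_e(q)\mid n$. What your version buys is conceptual transparency: the whole gcd bookkeeping is absorbed into the single standard identity $\mathrm{ord}_e(q^{K})=\mathrm{ord}_e(q)/\gcd(\mathrm{ord}_e(q),K)$, you never need the reduction to $j\le\lceil(m-1)/2\rceil$, and you avoid the somewhat awkward case distinction on $d$; what the paper's approach buys is that it is completely elementary and stays at the level of explicit exponents in $\mathbb{F}_{q^n}^{*}$, which matches the computational style used elsewhere in the section.
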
  
\begin{proof}
Define $d:=\gcd(m,n)$. We have 
\begin{eqnarray*}
    \frac{\Pi_{3}}{\Pi_{2}}&=&\frac{\alpha_{3}^{q^{(m-1)K}}\alpha_{2}^{q^{(m-2)K}} \cdots \alpha_{5}^{q^K}  \alpha_{4}}{\alpha_{2}^{q^{(m-1)K}}\alpha_{1}^{q^{(m-2)K}} \cdots \alpha_{4}^{q^K}  \alpha_{3}}=
    \frac{\left(\alpha_{3}^{-q^{(m-1)K}}\alpha_{2}^{-q^{(m-2)K}} \cdots \alpha_{4}^{-1}\right)^{q^K-1}}{\alpha_3^{1-q^{mK}}}\\
    \frac{\Pi_{4}}{\Pi_{2}}&=&\frac{\alpha_{4}^{q^{(m-1)K}}\alpha_{3}^{q^{(m-2)K}} \cdots \alpha_{6}^{q^K}  \alpha_{5}}{\alpha_{2}^{q^{(m-1)K}}\alpha_{1}^{q^{(m-2)K}} \cdots \alpha_{4}^{q^K}  \alpha_{3}}=
    \frac{\left(\alpha_{4}^{-q^{(m-1)K}}\alpha_{3}^{-q^{(m-2)K}} \cdots \alpha_{6}^{-q^K}\alpha_5^{-1}\right)^{q^{2K}-1}}{\left(\alpha_4^{q^K}\alpha_3\right)^{1-q^{mK}}}\\
    &\vdots&\\
    \frac{\Pi_{m}}{\Pi_{2}}&=&\frac{\alpha_{m}^{q^{(m-1)K}}\alpha_{m-1}^{q^{(m-2)K}} \cdots \alpha_{2}^{q^K}  \alpha_{1}}{\alpha_{2}^{q^{(m-1)K}}\alpha_{1}^{q^{(m-2)K}} \cdots \alpha_{4}^{q^K}  \alpha_{3}}=
    \frac{\left(\alpha_{m}^{-q^{(m-1)K}}\alpha_{m-1}^{-q^{(m-2)K}} \cdots \alpha_{2}^{-q^K}\alpha_1^{-1}\right)^{q^{(m-2)K}-1}}{\left(\alpha_m^{q^{(m-3)K}}\alpha_{m-1}^{q^{(m-4)K}}\cdots\alpha_4^{q^K}\alpha_3\right)^{1-q^{mK}}}\\
    \frac{\Pi_{1}}{\Pi_{2}}&=&\frac{\alpha_{1}^{q^{(m-1)K}}\alpha_{m}^{q^{(m-2)K}} \cdots \alpha_{3}^{q^K}  \alpha_{2}}{\alpha_{2}^{q^{(m-1)K}}\alpha_{1}^{q^{(m-2)K}} \cdots \alpha_{4}^{q^K}  \alpha_{3}}=
    \frac{\left(\alpha_{1}^{-q^{(m-1)K}}\alpha_{m}^{-q^{(m-2)K}} \cdots \alpha_{3}^{-q^K}\alpha_2^{-1}\right)^{q^{(m-1)K}-1}}{\left(\alpha_{1}^{q^{(m-2)K}}\alpha_m^{q^{(m-3)K}}\cdots\alpha_4^{q^K}\alpha_3\right)^{1-q^{mK}}}.
\end{eqnarray*}
So we obtain \begin{eqnarray*}
\frac{\Pi_\delta}{\Pi_2}=\frac{\xi_{\delta}}{\Delta^{q^{(\delta-2)K}-1}},&& \frac{\Pi_1}{\Pi_2}=\frac{\xi_{1}}{\Delta^{q^{(m-1)K}-1}},
\end{eqnarray*}
for any $\delta=3,\dots,m$, where $\Delta=\left(\alpha_{2}^{q^{(m-1)K}}\alpha_{1}^{q^{(m-2)K}} \cdots \alpha_{4}^{q^K}\alpha_3\right)$, and $\xi_1,\xi_3,\dots,\xi_m$ are $(q^{mK}-1)$-power, so $\Delta$ is not a $(q^{jK}-1)$-power for any $j=1,\dots,m-1$. 

    Observe that \begin{equation*}
    \Delta^{q^{jK}-1}={\Delta^{-q^{jK}(q^{(m-j)K}-1)}}{\Delta^{q^{mK}-1}},\end{equation*}
    thus, we can restrict our investigation to $j=1,\dots,\lceil \frac{m-1}{2}\rceil$.

    Let $d=\gcd(m,n)<m$, then $d\leq \lceil \frac{m-1}{2}\rceil$. Let $n=dn'$.

    Since
     \begin{eqnarray*}
        &&{\gcd(q^{dK}-1,q^n-1)\frac{q^n-1}{\gcd(q^n-1,q^{mK}-1)}}={q^{\gcd(dK,dn')-1}\cdot\frac{q^n-1}{q^{\gcd(dn',mK)}-1}}=\\&&
        {q^{d\gcd(K,n')-1}\cdot\frac{q^n-1}{q^{d\gcd(n',\frac{m}{d}K)}-1}}=
        {q^{d\gcd(K,n')-1}\cdot\frac{q^n-1}{q^{d\gcd(n',K)}-1}}=
        {q^n-1},
    \end{eqnarray*}
    then
    \begin{eqnarray*}
        \Delta^{\gcd(q^{dK}-1,q^n-1)\frac{q^n-1}{\gcd(q^n-1,q^{mK}-1)}}=1,\\
    \end{eqnarray*}
   and thus $\Delta^{q^{dK}-1}$ is a  (${q^{mK}-1}$)-power, a contradiction.
    \end{proof}
\begin{prop}
   If $n=mn'$, and $m\not|\, K$ then there are at least \begin{equation*}
    Q^{I,J}_{n,m}:=(q^n-1)^{m-1}\left((q^n-1)-\frac{q^n-1}{\gcd(q^n-1,C_{K,m})}-\sum_{j=1}^{\lceil \frac{m-1}{2}\rceil}(q^n-1)\frac{q^{\gcd(mn',j)}-1}{q^{m\gcd(n',K)}-1}\right)
    \end{equation*} $m$-uples $(\alpha_1,\dots,\alpha_m)$ such that $U^{I,J}_{(\alpha_1,\dots,\alpha_m)}$ is scattered and indecomposable, and $Q^{I,J}_{n,m}$ is positive.
\end{prop}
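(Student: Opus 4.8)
The plan is to count, for fixed $n=mn'$ and fixed $I,J$ with $m\nmid K$ (so that $\gcd(m,n)<m$ cannot be an obstruction to the divisibility conditions invoked earlier), the $m$-tuples $(\alpha_1,\dots,\alpha_m)\in(\mathbb{F}_{q^n}^*)^m$ that satisfy simultaneously the two arithmetic hypotheses of Theorem~\ref{maint}: that $K_{\mathbf{A}}^{I,J}$ is not a $C_{K,m}$-power, and that $\frac{\Pi_{\delta+2}}{\Pi_{2}}$ is not a $(q^{mK}-1)$-power for any $\delta=1,\dots,m-1$. First I would fix $\alpha_2,\dots,\alpha_m$ freely — that is the factor $(q^n-1)^{m-1}$ — and view both conditions as conditions on the single remaining variable $\alpha_1$. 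For the first condition, $K_{\mathbf{A}}^{I,J}$ depends on $\alpha_1$ through the factor $\alpha_1^{q^{(m-2)K}}$, and since $\alpha\mapsto\alpha^{q^{(m-2)K}}$ is a bijection of $\mathbb{F}_{q^n}^*$, the number of bad $\alpha_1$ equals the number of $C_{K,m}$-powers in $\mathbb{F}_{q^n}$, namely $\frac{q^n-1}{\gcd(q^n-1,C_{K,m})}$ (using $q^n\equiv 1$, $C_{K,m}=(q^{mK}-1)/(q^K-1)$, a standard index computation).

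Next I would handle the second family of conditions. Using the identities worked out in the Observation preceding this Proposition, each ratio $\frac{\Pi_{\delta+2}}{\Pi_2}$ is, up to a $(q^{mK}-1)$-power, equal to $\Delta^{-(q^{jK}-1)}$ for an explicit $j$ depending on $\delta$ (and, after the reduction $\Delta^{q^{jK}-1}\equiv\Delta^{-q^{jK}(q^{(m-j)K}-1)}\Delta^{q^{mK}-1}$ noted there, only $j=1,\dots,\lceil\frac{m-1}{2}\rceil$ need be considered), where $\Delta$ is a fixed monomial in $\alpha_1,\dots,\alpha_m$ whose $\alpha_1$-exponent $q^{(m-2)K}$ is prime to $q^n-1$. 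So for each $j$ the set of $\alpha_1$ for which $\Delta^{q^{jK}-1}$ is a $(q^{mK}-1)$-power is the preimage, under a bijection of $\mathbb{F}_{q^n}^*$, of $\{\beta:\beta^{q^{jK}-1}\text{ is a }(q^{mK}-1)\text{-power}\}$. The size of this last set is $(q^{jK}-1)$ times the number of $(q^{mK}-1)$-powers lying in the image of $\beta\mapsto\beta^{q^{jK}-1}$; bounding crudely by all $(q^{mK}-1)$-powers and simplifying the gcd's with $n=mn'$ exactly as in the Observation's computation (so that $\gcd(q^n-1,q^{jK}-1)=q^{\gcd(mn',j)}-1$ and $\gcd(q^n-1,q^{mK}-1)=q^{m\gcd(n',K)}-1$) gives at most $(q^n-1)\frac{q^{\gcd(mn',j)}-1}{q^{m\gcd(n',K)}-1}$ bad values, summed over $j=1,\dots,\lceil\frac{m-1}{2}\rceil$. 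A union bound over the first condition and these $\lceil\frac{m-1}{2}\rceil$ conditions, then multiplying by $(q^n-1)^{m-1}$ for the free choices of $\alpha_2,\dots,\alpha_m$, yields exactly the stated lower bound $Q^{I,J}_{n,m}$.

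Finally I would verify $Q^{I,J}_{n,m}>0$, equivalently that the parenthesised quantity is positive. Here one uses that $n\geq 2(mJ+J+1)$ forces $\gcd(q^n-1,C_{K,m})=C_{K,m}$ and $q^{m\gcd(n',K)}-1\geq q^m-1$ is large compared with each $q^{\gcd(mn',j)}-1\leq q^j-1<q^{m/2}$, so that the sum of all the subtracted terms is a fraction strictly less than $1$ of $q^n-1$ — a direct geometric-series estimate, as in the commented-out passage for the monomial case. The main obstacle is purely bookkeeping: one must be careful that the bijectivity of $\alpha_1\mapsto\alpha_1^{q^{(m-2)K}}$ decouples the count from the choice of $\alpha_2,\dots,\alpha_m$ uniformly, so that the union bound really produces a product and not something messier, and that the reindexing $\delta\leftrightarrow j$ together with the halving $j\le\lceil\frac{m-1}{2}\rceil$ is applied without double-counting; the gcd simplifications themselves are routine once $n=mn'$ and $m\nmid K$ are invoked.
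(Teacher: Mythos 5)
Your proposal follows the paper's proof in its essential structure: fix $\alpha_2,\dots,\alpha_m$, observe that $\alpha_1\mapsto K_{\mathbf{A}}^{I,J}$ and $\alpha_1\mapsto\Delta$ are bijections of $\mathbb{F}_{q^n}^*$ so that the count of bad $\alpha_1$ is independent of the fixed entries, bound each bad set via the gcd identities coming from $n=mn'$, take a union bound over the one $C_{K,m}$-power condition and the $\lceil\frac{m-1}{2}\rceil$ conditions on $\Delta^{q^{jK}-1}$, and multiply by $(q^n-1)^{m-1}$. That is exactly the paper's argument, and the bookkeeping caveat you raise (uniformity of the count in $\alpha_2,\dots,\alpha_m$, and avoiding double-counting after the $\delta\leftrightarrow j$ reindexing and halving) is the right thing to worry about.

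The only inaccuracy is in the positivity step. You claim that $n\geq 2(mJ+J+1)$ forces $\gcd(q^n-1,C_{K,m})=C_{K,m}$, but that inequality is not among the proposition's hypotheses, and even where it does hold it does not give $\gcd(q^n-1,C_{K,m})=C_{K,m}$. What the paper actually uses is the hypothesis $m\nmid K$ (together with $n=mn'$), which only guarantees $\gcd(q^n-1,C_{K,m})\geq 2$; this suffices to bound the first subtracted term by $\frac{q^n-1}{2}$, and each of the remaining $\lceil\frac{m-1}{2}\rceil$ terms is bounded using $\gcd(mn',j)\leq j\leq m/2$ and $m\gcd(n',K)\geq m$, giving $\frac{q^{m/2}-1}{q^m-1}=\frac{1}{q^{m/2}+1}$ per term. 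Summing yields $(q^n-1)\bigl(\frac12+\frac{m}{2(q^{m/2}+1)}\bigr)=(q^n-1)c^{q,m}$ with $c^{q,m}<1$, which is the geometric estimate you had in mind. With the justification corrected from ``$n\geq 2(mJ+J+1)$ forces $\gcd=C_{K,m}$'' to ``$m\nmid K$ gives $\gcd\geq 2$'' your argument coincides with the paper's.
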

\begin{proof}

 For any chosen $\alpha_2,\dots,\alpha_m$, the function $\alpha_1\longrightarrow K_{\textbf{A}}^{I,J}$ is a permutation of $\field$. So $K_{\textbf{A}}^{I,J}$ is a $C_{K,m}$-power for $(q^n-1)/(\gcd(q^n-1,C_{K,m}))$ choices of $\alpha_1$. A necessary condition to find a suitable $\alpha_1$ is that $\gcd(q^n-1,C_{K,m})\geq 2$, and a sufficient one is that $m$ does not divide $K$.

Also the function $\alpha_1\longrightarrow \Delta$ is a permutation, so counting the $\alpha_1$ such that $\Delta^{q^{jK}-1}$ is a $(q^{mK}-1)$-power is equivalent to count the solutions in $\field$ of the equation
\begin{equation*}
    x^{\gcd(q^n-1,q^j-1)\frac{q^n-1}{\gcd(q^n-1,q^{mK}-1)}}=1.
\end{equation*}
Remarking that $n=mn'$, the solutions are at most 
\begin{equation*}
    (q^n-1)\frac{q^{\gcd(mn',j)}-1}{q^{m\gcd(n',K)}-1}.
\end{equation*}
So the number of $\alpha_1$ such that $\us$ is not scattered and indecomposable is at most 
\begin{equation}\label{bound1}
\frac{q^n-1}{\gcd(q^n-1,C_{K,m})}+\sum_{j=1}^{\lceil \frac{m-1}{2}\rceil}(q^n-1)\frac{q^{\gcd(mn',j)}-1}{q^{m\gcd(n',K)}-1}.    
\end{equation}

    So there are at least $Q^{I,J}_{n,m}$  $m$-uples $(\alpha_1,\dots,\alpha_m)$ such that $\us$ is scattered and indecomposable.
    
    Consider
\begin{equation}\label{bound2}
    \frac{q^n-1}{2}+\frac{m}{2}(q^n-1)\frac{q^{\frac{m}{2}}-1}{q^{m}-1}=(q^n-1)c^{q,m},
\end{equation}
where \begin{equation*}
    c^{q,m}:=\frac{1}{2}+\frac{m}{2}\cdot\frac{1}{q^{\frac{m}{2}}+1}< 1.
    \end{equation*}
    Since $(\ref{bound1})\leq(\ref{bound2})$,
    \begin{equation*}
        Q^{I,J}_{n,m}\geq(q^n-1)^{m-1}\lceil(q^n-1)(1-c^{q,m})\rceil>0.
    \end{equation*}       
\end{proof}

    We will use the results obtained in \cite{bartoli2022new} to prove the minimality of the MRD codes associated to the scattered sequences $(f_1,\dots,f_m)$, indeed \cite[Theorem 2.15]{bartoli2022new} says that the code associated to $\us$ is minimal if and only if $\us$ is cutting. First we need a result on the evasiveness of $U^{I,J}_{\mathbf{A}}$.

   \begin{theorem}\label{ThCutting}
   If there exists $\overline{\delta}\in[1,\dots,m-1]$ such that $\frac{\Pi_{\overline{\delta}+2}}{\Pi_{2}}$ is not a $(q^{mK}-1)$-power in $\mathbb{F}_{q^n}$, then $U^{I,J}_{\mathbf{A}}$ is $(2m-2,mn-(2n-2J))_q$-evasive. 
\end{theorem}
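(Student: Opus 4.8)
The plan is to fix an arbitrary $(2m-2)$-dimensional $\mathbb{F}_{q^n}$-subspace $H$ of $V(2m,q^n)$ and to bound $\dim_{\mathbb{F}_q}(U^{I,J}_{\mathbf{A}}\cap H)$, following the set-up of the proofs of Theorem~\ref{thm2} and the evasiveness theorem for even $r$. Since $H$ has $\mathbb{F}_{q^n}$-codimension $2$, it is the solution set of two $\mathbb{F}_{q^n}$-linear equations in $z_1,\dots,z_{2m}$; choosing the defining equations adapted to the splitting into the first and the last $m$ coordinates, let $t$ of them involve only $z_1,\dots,z_m$ and $s=2-t$ be the remaining ones. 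After the substitution $z_i\mapsto x_i$, $z_{m+i}\mapsto f_i(\underline x)$, the system becomes $t$ linear equations $D\underline x=\underline 0$ with $D\in M_{t,m}(\mathbb{F}_{q^n})$ of full rank, together with $s$ semilinear equations $A\underline x+B\underline x^{q^I}+C\underline x^{q^J}=\underline 0$ in which $B=E$ has full rank and $C$ is obtained from $E$ by a cyclic column-shift followed by scaling the $\ell$-th column by $\alpha_\ell$, so that $C$ has full rank as well. As $\underline x\mapsto(\underline x,f_1(\underline x),\dots,f_m(\underline x))$ is $\mathbb{F}_q$-linear and injective, $\dim_{\mathbb{F}_q}(U^{I,J}_{\mathbf{A}}\cap H)$ equals the $\mathbb{F}_q$-dimension of the solution set of this system in $\mathbb{F}_{q^n}^m$, and I would split into the three cases $(s,t)\in\{(0,2),(1,1),(2,0)\}$.

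In the case $(s,t)=(0,2)$ the solution set is $\ker D$, of $\mathbb{F}_q$-dimension $(m-2)n=mn-2n\le mn-(2n-2J)$. In the case $(s,t)=(2,0)$ the system is purely semilinear with $s=2$ and with full-rank top matrix $C$; the Frobenius-twisting argument of Case~2 in the proof of Theorem~\ref{thm2} then bounds the solution set by $q^{mn-2(n-J)}$ --- equivalently, one brings $C$ to the shape $[I_2\mid C']$, solves for the first two unknowns in terms of the others, and uses that a system of two $q$-linearized equations in two unknowns with invertible leading coefficient has solution space of $\mathbb{F}_q$-dimension at most $2J$. In either way $\dim_{\mathbb{F}_q}(U^{I,J}_{\mathbf{A}}\cap H)\le mn-(2n-2J)$, and, crucially, neither of these cases uses the hypothesis on the $\Pi_i$ or any lower bound on $n$.

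The case $(s,t)=(1,1)$ is the heart of the argument. Now $D=(d_1,\dots,d_m)$ is a single nonzero row; set $W:=\ker D$ and let $\varphi:=(A\underline x+B\underline x^{q^I}+C\underline x^{q^J})\big|_W$, an $\mathbb{F}_q$-linear map $W\to\mathbb{F}_{q^n}$, so that the solution set is $\ker\varphi$. If $\varphi\not\equiv 0$, then after eliminating one coordinate to write $W\cong\mathbb{F}_{q^n}^{m-1}$ one has $\varphi(\underline y)=\sum_i g_i(y_i)$, where each $g_i$ is a $q$-linearized polynomial in which only the powers $y,y^{q^I},y^{q^J}$ occur; some $g_{i_0}\ne 0$ has at most $q^J$ roots in $\mathbb{F}_{q^n}$, so $\rk_{\mathbb{F}_q}\varphi\ge\rk_{\mathbb{F}_q}g_{i_0}\ge n-J$ and $\dim_{\mathbb{F}_q}\ker\varphi\le(m-1)n-(n-J)=mn-(2n-J)\le mn-(2n-2J)$. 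If instead $\varphi\equiv 0$, the $1\times m$ semilinear form vanishes on the hyperplane $\ker D$, hence factors as $\psi\circ D$ for some $\mathbb{F}_q$-linearized $\psi(y)=\sum_k p_ky^{q^k}$; comparing coefficients of $x_i^{q^k}$ forces $p_k=0$ for $k\notin\{0,I,J\}$, $A_i=p_0d_i$, $B_i=p_Id_i^{q^I}$ and $C_i=\alpha_iB_{i-1}=p_Jd_i^{q^J}$ (all indices mod $m$). Since $B=E$ is a nonzero row we get $p_I,p_J\ne 0$ and no $B_i$ vanishes, and eliminating the $d_i$ yields the cyclic recursion $\alpha_iB_{i-1}=(p_J/p_I^{q^K})B_i^{q^K}$. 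Iterating this around the $m$-cycle produces $B_i^{q^{mK}}=\big(p_I^{q^K}/p_J\big)^{C_{K,m}}\,\Pi_i\,B_i$ for every $i$ --- the same computation as in the proof of the evasiveness theorem for even $r$, with the matrix there replaced by scalars, the accumulated $\alpha$-product being exactly $\Pi_i$. Dividing this identity for $i=\overline\delta+2$ by the one for $i=2$ cancels the common factor and gives $\Pi_{\overline\delta+2}/\Pi_2=(B_{\overline\delta+2}/B_2)^{q^{mK}-1}$, contradicting the hypothesis that $\Pi_{\overline\delta+2}/\Pi_2$ is not a $(q^{mK}-1)$-power. Hence $\varphi\not\equiv 0$ always, and the bound from the first branch applies.

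The step I expect to be the main obstacle is the $\varphi\equiv 0$ branch of the case $(s,t)=(1,1)$: one must pass cleanly from ``the semilinear form vanishes on $\ker D$'' to the explicit coefficient relations, perform the cyclic elimination (the variable index and the $q$-exponent each shift by $K$ at every step and wrap after $m$ steps), recognise the accumulated products of the $\alpha_i$'s and of $p_I,p_J$ as precisely $\Pi_i$ and the $C_{K,m}$-th power appearing in the statement, and verify that no $B_i$ vanishes so that the final division is legitimate. The remaining ingredients --- the two easy cases and the reduction of $\ker\varphi$ to a single $q$-linearized polynomial of controlled degree --- are routine, and the whole argument goes through with no lower bound on $n$ assumed.
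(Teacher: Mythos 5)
Your proof is correct, and it matches the paper's in its overall organisation (split according to how many defining equations of $H$ involve only the first $m$ coordinates; the subcases $(s,t)=(0,2)$ and $(2,0)$ are handled identically). Where you genuinely diverge is in the central subcase $(s,t)=(1,1)$. The paper's Case~3 works directly with the two vectors $(c_1^{q^K},\dots,c_m^{q^K})$ and $(\alpha_1 c_m,\alpha_2 c_1,\dots,\alpha_m c_{m-1})$: it first uses the $\Pi$-hypothesis to show they cannot be proportional, then further splits on whether $(a_1^{q^J},\dots,a_m^{q^J})$ is proportional to $(\alpha_1 c_m,\dots,\alpha_m c_{m-1})$, and in each branch massages the pair of equations into one with a full-rank leading coefficient matrix so that the Frobenius-twisting count from Case~2 applies, giving the bound $q^{mn-2(n-J)}$. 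You instead restrict the semilinear form to $W=\ker D$, treat the result $\varphi$ as a single $\mathbb{F}_q$-linear map $W\to\mathbb{F}_{q^n}$, and split on whether $\varphi\equiv 0$. When $\varphi\not\equiv 0$ you get $\dim_{\mathbb{F}_q}\ker\varphi\le mn-2n+J$ directly from the rank of one nonzero univariate $q$-linearized component (this is in fact slightly sharper than the paper's $mn-2n+2J$, though both suffice). When $\varphi\equiv 0$ you factor the form as $\psi\circ D$, compare coefficients to get $B_i=p_I d_i^{q^I}$ and $\alpha_i B_{i-1}=p_J d_i^{q^J}$, and run the same cyclic elimination that the paper performs on the $c_i$, landing on $\Pi_{\overline\delta+2}/\Pi_2=(B_{\overline\delta+2}/B_2)^{q^{mK}-1}$, the desired contradiction. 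So the $\Pi$-hypothesis is applied at exactly the same algebraic spot in both arguments (an $m$-step cyclic recursion accumulating the twisted product $\Pi_i$), but your route dispenses with the Frobenius count in Case~3 and with the auxiliary proportionality of $(a^{q^J})$; the paper's route is closer in spirit to its Theorem~\ref{thm2} and reuses that machinery verbatim. Both are correct, and you are also right that no lower bound on $n$ is needed.
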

\begin{proof}
Let $H$ be a subspace of $\mathbb{F}^{2m}_{q^n}$ of dimension $2m-2$ and consider its equations in $z_1,\dots,z_{2m}$. Let $t$ be the number of equations containing only the first $m$ variables, and let $s$ be the number of the remaining equations. We consider three cases.

\textbf{Case 1.} ${t=2,s=0}$

We can easily bound the size of $H\cap U^{I,J}_{\mathbf{A}}$ by $q^{n(m-2)}$. Obviously, $n(m-2)\leq mn-2n+2J$.

\textbf{Case 2.} ${t=0,s=2}$

We focus on the $2$ semilinear equations in $x_1\dots x_m$. In order to bound the size of $H\cap U^{I,J}_{\mathbf{A}}$, we consider the unknowns as $x_{ij}=x_i^{q^j}$. We can apply $n-J-1$ Frobenius to the $2$ equations defining $H\cap U^{I,J}_{\mathbf{A}}$ and obtain a system of $2(n-J)$ equations with unknowns $x_{ij}$, with $i=1,\dots,m,j=0,\dots,n$. 
The total number of  solutions $(\overline{x}_{11},\dots,\overline{x}_{mn})$ of such a system  is at most $q^{mn-2(n-J)}$. 

\textbf{Case 3.} ${t=1,s=1}$

The system describing $H\cap U^{I,J}_{\mathbf{A}}$ reads 
\begin{equation*}
    \begin{cases}
        a_1x_1+\cdots+a_mx_m=0\\
        b_1x_1+\cdots+b_mx_m+c_1(x_1\qi+\alpha_2x_2\qj)+\cdots+c_m(x_m\qi+\alpha_1x_1\qj)=0.
    \end{cases}
\end{equation*}
We can write it as 
\begin{equation*}
    \begin{cases}
        (a_1,\dots,a_m)\cdot(x_1,\dots,x_m)^T=0\\
        (b_1,\dots,b_m)\cdot(x_1,\dots,x_m)^T+(c_1,\dots,c_m)\cdot(x_1\qi,\dots,x_m\qi)^T+\\+(\alpha_1c_m,\dots,\alpha_mc_{m-1})\cdot(x_1\qj,\dots,x_m\qj)^T=0.\\
      
    \end{cases}
\end{equation*}

First, observe that $(c_1\qk,c_2\qk,\ldots,c_m\qk)$ cannot be proportional to $(\alpha_1c_m,\alpha_2c_1,\ldots,\alpha_mc_{m-1})$.
In fact, consider $\ell \in \mathbb{F}^*_{q^n}$  such that 
$$\ell(c_1\qk,c_2\qk,\ldots,c_m\qk)=(\alpha_1c_m,\alpha_2c_1,\ldots,\alpha_mc_{m-1})$$
and observe that $c_i$ being zero for any $i=1,\dots,m$ yields to $(c_1,\dots,c_m)=(0,\dots,0)$, a contradiction.

So
\begin{eqnarray*}
    c_i^{q^{mK}}=\ell^{-q^{(m-1)K}}\alpha_{i}^{q^{(m-1)K}}c_{i+1}^{q^{(m-1)K}}=\cdots=\overline{\ell}\alpha_i^{q^{(m-1)K}}\alpha_{i+1}^{q^{(m-2)K}}\cdots\alpha_{i-1}c_i=\overline{\ell}\Pi_ic_i,   
\end{eqnarray*}
where
$\overline{\ell}=(\ell^{-1})^{q^{(m-1)K}} (\ell^{-1})^{q^{(m-2)K}} \cdots (\ell^{-1})^{q^{K}}(\ell^{-1})$.
Then $\Pi_{\overline{\delta}+2} /\Pi_2$ is a $(q^{mK}-1)-$power, a contradiction.

If $(a_1\qj,\dots,a_m\qj)$ is not proportional to $(\alpha_1c_m,\alpha_2c_1,\ldots,\alpha_mc_{m-1})$, we can consider the system
\begin{equation*}
    \begin{cases}
        (a_1\qj,\dots,a_m\qj)\cdot(x_1\qj,\dots,x_m\qj)^T=0\\
        (b_1,\dots,b_m)\cdot(x_1,\dots,x_m)^T+(c_1,\dots,c_m)\cdot(x_1\qi,\dots,x_m\qi)^T+\\+(\alpha_1c_m,\dots,\alpha_mc_{m-1})\cdot(x_1\qj,\dots,x_m\qj)^T=0,
      
    \end{cases}
\end{equation*}
and we can proceed similarly to \textbf{Case 2.} and obtain the same bound on the number of solutions.

If $(a_1\qj,\dots,a_m\qj)$ is proportional to $(\alpha_1c_m,\alpha_2c_1,\ldots,\alpha_mc_{m-1})$, we can consider the system
\begin{equation*}
    \begin{cases}
        (a_1\qj,\dots,a_m\qj)\cdot(x_1\qj,\dots,x_m\qj)^T=0\\
        (b_1\qk,\dots,b_m\qk)\cdot(x_1\qk,\dots,x_m\qk)^T+(c_1\qk,\dots,c_m\qk)\cdot(x_1\qj,\dots,x_m\qj)^T=0,
      
    \end{cases}
\end{equation*}
and from since $(a_1\qj,\dots,a_m\qj)$ cannot be proportional to $(c_1\qk,c_2\qk,\ldots,c_m\qk)$, we obtain the same bound for $H\cap U^{I,J}_{\mathbf{A}}$.

\end{proof}
 \begin{cor}
 If $n\geq2J+1$ and there exists $\overline{\delta}\in[1,\dots,m-1]$ such that $\frac{\Pi_{\overline{\delta}+2}}{\Pi_{2}}$ is not a $(q^{mK}-1)$-power in $\mathbb{F}_{q^n}$, then $U^{I,J}_{\mathbf{A}}$ is cutting and the code associated to $\us$ is minimal.
 \end{cor}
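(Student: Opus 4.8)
The plan is to derive the corollary directly from Theorem~\ref{ThCutting} via the evasiveness characterisation of linear cutting blocking sets, and then to quote \cite[Theorem 2.15]{bartoli2022new} for the minimality statement.

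First I would record that $U^{I,J}_{\mathbf{A}}$ is a $q$-system of $V(2m,q^n)$, i.e.\ it $\mathbb{F}_{q^n}$-spans the ambient space: a relation $\sum_{i=1}^{m}a_ix_i+\sum_{i=1}^{m}b_if_i(\underline x)=0$ holding for all $\underline x\in\field^m$, restricted to one variable at a time, becomes a $q$-linearized polynomial of $q$-degree below $q^n$ (here the hypothesis $I<J<n$ is used) that vanishes on all of $\field$, hence is identically $0$; running over all $m$ variables forces every $a_i$ and $b_i$ to vanish. Thus $U^{I,J}_{\mathbf{A}}$ is a $q$-system of $\mathbb{F}_q$-dimension $mn$ in $V(2m,q^n)$, which is the setting in which ``cutting'' is defined.

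Next I would use the standard dictionary (see \cite{ABNR22}): a $q$-system $U\subseteq V(k,q^n)$ with $\dim_{\mathbb{F}_q}U=N$ is cutting if and only if it is $(k-2,\,N-n-1)_q$-evasive. The easy implication uses only that every $\mathbb{F}_{q^n}$-hyperplane meets $U$ in $\mathbb{F}_q$-dimension at least $N-n$, so an intersection landing inside a codimension-$2$ subspace is impossible; the converse is a short inclusion--exclusion count over the pencil of $q^n+1$ hyperplanes through a fixed $(k-2)$-dimensional subspace. With $k=2m$ and $N=mn$, being cutting is therefore equivalent to being $(2m-2,\,mn-n-1)_q$-evasive.

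Finally, Theorem~\ref{ThCutting} gives, under the hypothesis that some $\Pi_{\overline\delta+2}/\Pi_2$ is not a $(q^{mK}-1)$-power, that $U^{I,J}_{\mathbf{A}}$ is $(2m-2,\,mn-(2n-2J))_q$-evasive. Since $n\geq 2J+1$,
\[
mn-(2n-2J)=mn-2n+2J\leq mn-2n+(n-1)=mn-n-1,
\]
so $U^{I,J}_{\mathbf{A}}$ is a fortiori $(2m-2,\,mn-n-1)_q$-evasive, hence cutting, and \cite[Theorem 2.15]{bartoli2022new} then yields minimality of the associated rank-metric code. The only point that needs care is lining up the evasiveness thresholds correctly (and, should the cutting $\iff$ evasive equivalence not be quoted verbatim in the references, inserting the short pencil-counting argument above); once that is settled the corollary reduces to the one-line inequality just displayed.
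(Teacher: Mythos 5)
Your proposal is correct and follows essentially the same route as the paper: apply Theorem~\ref{ThCutting}, use $n\geq 2J+1$ to deduce $(2m-2,\,mn-n-1)_q$-evasiveness from $(2m-2,\,mn-(2n-2J))_q$-evasiveness, invoke the evasive$\iff$cutting characterisation, and then cite \cite[Theorem 2.15]{bartoli2022new} for minimality. The only divergence is cosmetic: you derive the evasive$\iff$cutting dictionary (and the $q$-system property) by hand and attribute it to \cite{ABNR22}, whereas the paper simply cites \cite[Theorem 3.3]{bartoli2022new} for that step.
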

 \begin{proof}
     The assumption on $n$ guarantees that $mn-n-1\geq mn-(2n-2J)$. By Theorem \ref{ThCutting}, $\us$ is $(2m-2,mn-n-1)_q$-evasive, and by \cite[Theorem 3.3]{bartoli2022new} and \cite[Theorem 2.15]{bartoli2022new} the claim follows.\end{proof}

\begin{remark} \rm{Given an $\mathbb{F}_q$-subspace we can define its \textit{second generalized weight} as
 $$ d_{rk,2}(U)=n-\max\{\dim_{\mathbb{F}_q}(U\cap W) \,:\, W\subseteq \mathbb{F}^k_q, \ \dim_{\mathbb{F}_{q^n}}(W)=k-2\}.$$
For the second generalized weight, the upper bound in \cite{martinez2016similarities} reads in our case  \begin{equation}\label{Eq:bound}d_{rk,2}(\us)\leq 2n-1.\end{equation}
By Theorem \ref{ThCutting} we know that $d_{rk,2}(\us)\geq 2n-2J$, and as $\us$ is scattered and indecomposable in infinitely many extensions $\mathbb{F}_{q^{n\ell}}$ of $\mathbb{F}_{q^n}$, it is notable that as $n$ grows, the difference between $d_{rk,2}(\us)$ and its upper bound becomes negligible relative to the size of $\mathbb{F}_{q^{n\ell}}$.

}
\end{remark}

\section{Equivalence issue}
\indent This section is devoted to determine a lower bound on the number of $\Gamma \mathrm{L}_q(2m,q^n)$-inequivalent scattered sets contained in our family. Recall that, since $q=p^h$, the size of $Aut(\mathbb{F}_{q^n})$ is $hn$.
\begin{theorem}
    Let $I,J,I_0,J_0$ be nonnegative integers, such that $J+J_0<n$, $I< J,$ and $I_0< J_0$. The two sets $\us$ and $U_{\mathbf{\overline{A}}}^{I_0,J_0}$ are not $\Gamma \mathrm{L}(2m,q^n)$-equivalent if $(I,J)\neq(I_0,J_0)$.
     \end{theorem}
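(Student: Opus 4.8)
The plan is to assume a $\Gamma\mathrm{L}(2m,q^n)$-equivalence between the two sets and derive $(I,J)=(I_0,J_0)$. The argument splits into (i) removing the field automorphism, reducing to a $\GL$-equivalence, and (ii) a Frobenius-exponent comparison in the functional identity that such an equivalence produces, where the hypothesis $J+J_0<n$ is exactly what keeps the relevant exponents from reducing modulo $n$.

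\textbf{Reduction to $\GL$.} An element of $\Gamma\mathrm{L}(2m,q^n)$ is a pair $(M,\sigma)$ with $M\in\GL(2m,q^n)$ and $\sigma\in\Aut(\mathbb{F}_{q^n})$, acting by $v\mapsto M\sigma(v)$. Since each $f_i$ is a $q$-polynomial and every automorphism of $\mathbb{F}_{q^n}$ is a power of the Frobenius, a direct change-of-variables computation shows $(\us)^{\sigma}=U^{I,J}_{\mathbf{A}^{\sigma}}$, the coordinatewise image of $\mathbf{A}$; in particular the pair of indices $(I,J)$ is unaffected. Therefore $(M,\sigma)\cdot \us=U^{I_0,J_0}_{\overline{\mathbf A}}$ yields $M\cdot U^{I,J}_{\mathbf{A}^{\sigma}}=U^{I_0,J_0}_{\overline{\mathbf A}}$, so it is enough to prove that, for arbitrary coefficient vectors (all of whose entries are nonzero), $U^{I,J}_{\mathbf A}$ and $U^{I_0,J_0}_{\overline{\mathbf A}}$ are not $\GL(2m,q^n)$-equivalent once $(I,J)\neq(I_0,J_0)$. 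Writing a candidate equivalence in $m\times m$ block form $M=\begin{pmatrix}A&B\\ C&D\end{pmatrix}$ and $\underline f=(f_1,\dots,f_m)$, $\underline g=(g_1,\dots,g_m)$ for the two tuples of linearized maps, the condition $M\cdot U^{I,J}_{\mathbf A}=U^{I_0,J_0}_{\overline{\mathbf A}}$ is equivalent to
\begin{equation}\label{eq:funct}
\underline g\bigl(A\underline x+B\underline f(\underline x)\bigr)=C\underline x+D\underline f(\underline x)\quad\text{for all }\underline x\in\mathbb{F}_{q^n}^m,
\end{equation}
together with $\underline x\mapsto A\underline x+B\underline f(\underline x)$ being a bijection of $\mathbb{F}_{q^n}^m$.

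\textbf{Exponent comparison.} I would expand both sides of \eqref{eq:funct} as reduced $q$-linearized polynomials in $x_1,\dots,x_m$. The left side only involves the Frobenius exponents $\{I_0,J_0,I{+}I_0,I{+}J_0,J{+}I_0,J{+}J_0\}$ and the right side only $\{0,I,J\}$; because $I<J$, $I_0<J_0$ and $J+J_0<n$, all of these raw exponents already lie in $\{0,\dots,n-1\}$, so there is no reduction modulo $n$ and coefficients of each $x_k^{q^e}$ may be matched directly. The exponent $J+J_0$ is strictly the largest one occurring and is not in $\{0,I,J\}$, and on the left side the coefficient of $x_k^{q^{J+J_0}}$ in the $i$-th component equals $\overline\alpha_{i+1}\,(B_{i+1,k-1}\alpha_k)^{q^{J_0}}$ (its sole origin is the $q^J$-part of the $(i{+}1)$-st entry of $A\underline x+B\underline f(\underline x)$). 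Since all $\alpha_k,\overline\alpha_i$ are nonzero, matching this to the zero coefficient on the right forces $B=0$, and then the bijectivity condition gives $A\in\GL(m,q^n)$. Now \eqref{eq:funct} reads $\underline g(A\underline x)=C\underline x+D\underline f(\underline x)$, whose left side has $q^{I_0}$- and $q^{J_0}$-homogeneous parts $\sum_jA_{ij}^{q^{I_0}}x_j^{q^{I_0}}$ and $\overline\alpha_{i+1}\sum_jA_{i+1,j}^{q^{J_0}}x_j^{q^{J_0}}$, both nonzero (rows of the invertible $A$ are nonzero and $\overline\alpha_{i+1}\neq0$), so $I_0,J_0\in\{0,I,J\}$. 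If $I_0\geq1$, then $I_0$ and $J_0$ are distinct elements of $\{I,J\}$ both $\geq1$; since $J\geq1$ always, this forces $I\geq1$, and then $I_0<J_0$, $I<J$ give $I_0=I$, $J_0=J$, contradicting $(I,J)\neq(I_0,J_0)$. If $I_0=0$, then $J_0\in\{I,J\}$ with $J_0\geq1$; when $J_0=J$ the hypothesis $(I,J)\neq(I_0,J_0)$ forces $I\geq1$, so in both sub-cases $\{I,J\}\setminus\{0,J_0\}\neq\emptyset$, and for any $e$ in it the right side of $\underline g(A\underline x)=C\underline x+D\underline f(\underline x)$ would carry a $q^e$-term (unless the relevant rows of $D$ vanish) that is absent on the left; hence $D=0$. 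But $B=D=0$ leaves $M$ with $m$ zero columns, contradicting $M\in\GL(2m,q^n)$. In every case we reach a contradiction, so no equivalence exists.

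\textbf{Main obstacle.} The only genuinely delicate point is the legitimacy of the exponent bookkeeping: the composition on the left of \eqref{eq:funct} can in principle produce Frobenius exponents as large as $J+J_0$, and were these to reduce modulo $n$ they could collide with the small exponents $0,I,J$ on the right and void the comparison. The hypothesis $J+J_0<n$ is precisely what forbids this; once it is in place, the rest is routine linear bookkeeping, together with the short case split according to whether $I_0$ (or, by the symmetric roles of the two spaces, $I$) vanishes.
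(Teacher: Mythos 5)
Your proof is correct, and while it exploits the same fundamental mechanism as the paper (the hypothesis $J+J_0<n$ prevents Frobenius exponents from wrapping modulo $n$, so coefficients can be compared exponent by exponent in the functional identity), it is organized along genuinely different lines and is considerably cleaner. The paper does not pass through the block decomposition $M=\begin{pmatrix}A&B\\C&D\end{pmatrix}$; instead it isolates the $(m+1)$-th row of $M$, writes out a single scalar identity obtained from rows $1$, $2$ and $m+1$, and then grinds through a fairly long case analysis ($I\neq I_0,J_0$; $I=J_0$ with sub-cases $J=I+I_0$, $J=2I$, etc.; $I=I_0$, $J\neq J_0$ with its own sub-cases) to show that the entire $(m+1)$-th row vanishes, so $M$ is singular. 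You instead observe that $J+J_0$ is the strictly maximal exponent and lies outside $\{0,I,J\}$, which in one stroke kills the whole block $B$; then $A$ is invertible, the identity collapses to $\underline g(A\underline x)=C\underline x+D\underline f(\underline x)$, and comparing the two surviving exponents $I_0,J_0$ against $\{0,I,J\}$ finishes the job with a short two-branch split. Your route also dispenses with the paper's preliminary normalization ``w.l.o.g.\ $J\geq J_0$'' and yields a different contradiction in the $I_0=0$ branch (the column block $\begin{pmatrix}B\\D\end{pmatrix}$ vanishes) rather than a zero row; both are of course equally fatal for $M\in\GL(2m,q^n)$. The net effect is a shorter and more transparent argument that buys conceptual clarity at no cost in generality.
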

    \begin{proof} W.l.o.g. we can consider $J\geq J_0$. The two sets $\us$ and $U_{\mathbf{\overline{A}}}^{I_0,J_0}$ are $\Gamma \mathrm{L}(2m,q^n)$-equivalent if and only if there exist $\sigma\in Aut(\mathbb{F}_{q^n})$ and a matrix $M=(a_{ij})_{i,j=1,\dots,2m}\in \GL(2m,q^n)$ such that
    $$M\cdot
    \begin{pmatrix}
        x_1^\sigma\\
        \vdots\\
        x_m^\sigma\\
        (x_1\qi+\alpha_2x_2\qj)^\sigma\\
        \vdots\\
        (x_m\qi+\alpha_1x_1\qj)^\sigma
    \end{pmatrix}
=\begin{pmatrix}
    u_1\\
    \vdots\\
    u_m\\
    u_1\qio+\overline{\alpha_2}u_2\qjo\\
    \vdots\\
    u_m\qio+\overline{\alpha_1}u_1\qjo\\
\end{pmatrix}.
$$
Denote $\widetilde{x}_1:=x_1\as,\dots,\widetilde{x}_m:=x_m\as,A_1:=\alpha_1\as,\dots,A_m:=\alpha_m\as$. We obtain\begin{small} 
    
\begin{equation}\label{system}
\begin{cases}
   \langle \underline{a}_{1},\underline{\widetilde{x}}\rangle+a_{1(m+1)}(\widetilde{x}_1\qi+A_2\widetilde{x}_2\qj)+\cdots+a_{1(2m)}(\widetilde{x}_m\qi+A_1\widetilde{x}_1\qj)=u_1\\
   \vdots\\
   \langle \underline{a}_{m},\underline{\widetilde{x}}\rangle+a_{m(m+1)}(\widetilde{x}_1\qi+A_2\widetilde{x}_2\qj)+\cdots+a_{m(2m)}(\widetilde{x}_m\qi+A_1\widetilde{x}_1\qj)=u_m\\
    \langle \underline{a}_{m+1},\underline{\widetilde{x}}\rangle+a_{(m+1)(m+1)}(\widetilde{x}_1\qi+A_2\widetilde{x}_2\qj)+\cdots+a_{(m+1)(2m)}(\widetilde{x}_m\qi+A_1\widetilde{x}_1\qj)=u_1\qio+\overline{\alpha_2}u_2\qjo\\
   \vdots\\
   \langle \underline{a}_{2m},\underline{\widetilde{x}}\rangle+a_{(2m)(m+1)}(\widetilde{x}_1\qi+A_2\widetilde{x}_2\qj)+\cdots+a_{(2m)(2m)}(\widetilde{x}_m\qi+A_1\widetilde{x}_1\qj)=u_m\qio+\overline{\alpha_1}u_1\qjo,
\end{cases}
\end{equation}
\end{small}
where $\langle \underline{a}_{\delta},\underline{\widetilde{x}}\rangle= a_{(\delta)1}\widetilde{x}_1+\cdots+a_{(\delta) m}\widetilde{x}_m$.
Combining the first, the second and the $(m+1)$-th equations we obtain 
\begin{align*}
a_{(m+1)1}\widetilde{x}_1+\cdots+a_{(m+1)m}\widetilde{x}_m+a_{(m+1)(m+1)}(\widetilde{x}_1\qi+A_2\widetilde{x}_2\qj)+\cdots+a_{(m+1)(2m)}(\widetilde{x}_m\qi+A_1\widetilde{x}_1\qj)&+\\-a^{q^{I_0}}_{11}\widetilde{x}_1\qio-\cdots-a^{q^{I_0}}_{1m}\widetilde{x}_m\qio&+\\-a^{q^{I_0}}_{1(m+1)}(\widetilde{x}_1^{q^{I+I_0}}+A_2\qio\widetilde{x}_2^{q^{J+I_0}})-\cdots-a^{q^{I_0}}_{1(2m)}(\widetilde{x}_m^{q^{I+I_0}}+A_1\qio\widetilde{x}_1^{q^{J+I_0}})&+\\-\overline{\alpha_2}(a^{q^{J_0}}_{21}\widetilde{x}_1\qjo+\cdots+a^{q^{J_0}}_{2m}\widetilde{x}_m\qjo&+\\+a^{q^{J_0}}_{2(m+1)}(\widetilde{x}_1^{q^{I+J_0}}+A_2\qjo\widetilde{x}_2^{q^{J+J_0}})+\cdots+a^{q^{J_0}}_{2(2m)}(\widetilde{x}_m^{q^{I+J_0}}+A_1\qjo\widetilde{x}_1^{q^{J+J_0}}))&=0.\tag{a}\label{eq1}
 \end{align*}
 
 If there exists an element in $\GL(2m,{q^n})$ such that (\ref{eq1}) is satisfied for every $\widetilde{x}_1, \dots, \widetilde{x}_m$, then the polynomial above (in the variables $\widetilde{x}_1, \dots, \widetilde{x}_m$) must vanish. From the assumptions on $I, J, I_0, J_0$, we must have that $a_{(m+1)1}= \dots= a_{(m+1)m} = 0$. We consider a number of cases.
 \begin{itemize}
     \item $I\neq I_0,J_0$
     
     Considering the coefficients of $\widetilde{x}_1\qi,\dots,\widetilde{x}_m\qi$ we obtain $$a_{(m+1)(m+1)}=\cdots=a_{(m+1)2m}=0.$$ Hence $M\not\in \GL(2m,{q^n})$.
     \item $I=J_0$

     \begin{itemize}
         \item $J\neq I+I_0,2I$

         Considering the coefficients of $\widetilde{x}_1\qj,\dots,\widetilde{x}_m\qj$ we obtain $$a_{(m+1)(m+1)}=\cdots=a_{(m+1)2m}=0.$$ Hence $M\not\in \GL(2m,{q^n})$.

         \item $J=I+I_0$ and $J+I_0=2I$

         Considering the coefficients of 
         $\widetilde{x}_1\qj,\dots,\widetilde{x}_m\qj,\widetilde{x}_1^{q^{I+J_0}},\dots,\widetilde{x}_m^{q^{I+J_0}},\widetilde{x}_1^{q^{J+J_0}},\dots,\widetilde{x}_m^{q^{J+J_0}}$ we obtain $$a_{(m+1)(m+1)}=\cdots=a_{(m+1)2m}=0.$$ Hence $M\not\in\GL(2m,{q^n})$.

         \item $J=I+I_0$ and $J+I_0\neq 2I$

         Considering the coefficients of 
         $\widetilde{x}_1\qj,\dots,\widetilde{x}_m\qj,\widetilde{x}_1^{q^{J+I_0}},\dots,\widetilde{x}_m^{q^{J+I_0}}$ we obtain $$a_{(m+1)(m+1)}=\cdots=a_{(m+1)2m}=0.$$ Hence $M\not\in \GL(2m,{q^n})$.

         \item $J=2I$ and $J\neq I+I_0$

         Considering the coefficients of 
         $\widetilde{x}_1\qj,\dots,\widetilde{x}_m\qj,\widetilde{x}_1^{q^{J+J_0}},\dots,\widetilde{x}_m^{q^{J+J_0}}$ we obtain $$a_{(m+1)(m+1)}=\cdots=a_{(m+1)2m}=0.$$ Hence $M\not\in \GL(2m,{q^n})$.
         \end{itemize}
    \item $I=I_0$ and $J\neq J_0$
    
    \begin{itemize}
        \item $J\neq I+J_0$ and $J\neq 2I$

        Considering the coefficients of 
         $\widetilde{x}_1\qj,\dots,\widetilde{x}_m\qj$ we obtain $$a_{(m+1)(m+1)}=\cdots=a_{(m+1)2m}=0.$$ Hence $M\notin \GL(2m,{q^n})$.

        \item $J= I+J_0$

        Considering the coefficients of 
         $\widetilde{x}_1\qj,\dots,\widetilde{x}_m\qj,\widetilde{x}_1^{q^{J+J_0}},\dots,\widetilde{x}_m^{q^{J+J_0}}$ we obtain $$a_{(m+1)(m+1)}=\cdots=a_{(m+1)2m}=0.$$ Hence $M\not\in \GL(2m,{q^n})$.

        \item $J=2I$ and so $J_0\neq3I$

        Considering the coefficients of 
         $\widetilde{x}_1\qj,\dots,\widetilde{x}_m\qj,\widetilde{x}_1^{q^{I+J}},\dots,\widetilde{x}_m^{I+J}$ we obtain $$a_{(m+1)(m+1)}=\cdots=a_{(m+1)2m}=0.$$ Hence $M\not\in \GL(2m,{q^n})$.
    \end{itemize}

 \end{itemize}
 \end{proof}

 \begin{theorem}  
 Let $(I,J)$ be such that $J<n/2$, $\gcd(I,J)=1$. Two sets $\us$ and $U_{\mathbf{\overline{A}}}^{I,J}$ are $\Gamma L(2m,q^n)$-equivalent if and only if there exists $\sigma\in Aut(\mathbb{F}_{q^n})$ such that one among these $m$ elements is a $q^{mK}-1$ power:\begin{eqnarray*}
     C_1&:=&\left( \frac{\overline{\alpha_{2}}}{{\alpha}^{\sigma}_{2}}\right)\left( \frac{\overline{\alpha_{3}}}{{\alpha}^{\sigma}_{3}}\right)^{q^{K}}\cdots\left( \frac{\overline{\alpha_{m}}}{{\alpha}^{\sigma}_{m}}\right)^{q^{(m-2)K}}\left(\frac{\overline{\alpha_{1}}}{{\alpha}^{\sigma}_1}\right)^{q^{(m-1)K}}\\
     C_{\delta}&:=&\left( \frac{\overline{\alpha_{\delta+1}}}{{\alpha}^{\sigma}_{2}}\right)\cdots\left( \frac{\overline{\alpha_{m}}}{{\alpha}^{\sigma}_{m-\delta+1}}\right)^{q^{(m-\delta-1)K}}
     \left(\frac{\overline{\alpha_{1}}}{{\alpha}^{\sigma}_{m-\delta+2}}\right)^{q^{(m-\delta)K}}\cdots
     \left(\frac{\overline{\alpha_{\delta-1}}}{{\alpha}^{\sigma}_m}\right)^{q^{(m-2)K}}
     \left(\frac{\overline{\alpha_{\delta}}}{{\alpha}^{\sigma}_1}\right)^{q^{(m-1)K}}\\
     C_m&:=&\left(\frac{\overline{\alpha_{1}}}{{\alpha}^{\sigma}_{2}}\right)\cdots\left(\frac{\overline{\alpha_{m-1}}}{{\alpha}^{\sigma}_m}\right)^{q^{(m-2)K}}\left(\frac{\overline{\alpha_{m}}}{{\alpha}^{\sigma}_1}\right)^{q^{(m-1)K}},
 \end{eqnarray*}
 with $\delta=2,\dots,m-1$.
 \end{theorem}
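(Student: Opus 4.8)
The plan is to translate the geometric equivalence into an identity of linearized polynomials, exploit the separation hypothesis $J<n/2$ to force the block shape of the conjugating matrix, reduce everything to a single semilinear matrix equation, and then solve that equation by a telescoping Hilbert~90 argument.

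First I would make the equivalence explicit: $\us$ and $U_{\mathbf{\overline{A}}}^{I,J}$ are $\Gamma\mathrm{L}(2m,q^n)$-equivalent precisely when there exist $\sigma\in\Aut(\mathbb{F}_{q^n})$ and $M\in\GL(2m,q^n)$ sending the generating vector $(x_1,\dots,x_m,f_1,\dots,f_m)$ of $\us$, with coefficients $\sigma$-twisted and evaluated at $\widetilde x_i:=x_i^{\sigma}$, to the generating vector of $U_{\mathbf{\overline{A}}}^{I,J}$, exactly as in the previous theorem with $I_0=I$, $J_0=J$. Partition $M$ into four $m\times m$ blocks $P,Q,R,S$. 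Solving the first $m$ equations for $\underline u$ and substituting into the last $m$ gives an identity of $q$-linearized polynomials in $\widetilde x_1,\dots,\widetilde x_m$ whose only monomials are the powers $q^{0},q^{I},q^{J},q^{2I},q^{I+J},q^{2J}$ of each $\widetilde x_i$. Because $J<n/2$, these six exponents all lie in $[0,n)$ and are pairwise distinct (apart from the coincidence $J=2I$, which forces $I=1$, $J=2$); hence the identity is equivalent to the finite list of matrix equations obtained by matching coefficients of equal powers.

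Carrying out the comparison is routine. The $q^{2J}$-part forces $\overline{\mathcal A}(Q\mathcal A^{\sigma})^{(J)}=0$, hence $Q=0$, since the ``$\alpha$-shift'' matrices $\mathcal A$, $\overline{\mathcal A}$ (the monomial matrices realising $\underline x\mapsto(\alpha_2 x_2,\dots,\alpha_m x_m,\alpha_1 x_1)$, and its $\overline{\mathbf A}$-analogue) are invertible as all $\alpha_i,\overline{\alpha}_i$ are nonzero; here $N^{(t)}$ denotes the entrywise $q^t$-power of a matrix $N$. The $q^{0}$-part then gives $R=0$, and the $q^{I}$- and $q^{J}$-parts give $S=P^{(I)}$ together with the crucial relation
$$P^{(I)}\,\mathcal A^{\sigma}=\overline{\mathcal A}\,P^{(J)}.$$
The configuration $J=2I$ merges two coefficient equations but a direct check yields the same three conclusions; conversely, any $\sigma$ and nonsingular $P$ satisfying the displayed relation produce the admissible matrix $M=\operatorname{diag}(P,P^{(I)})$. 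Thus, for a fixed $\sigma$, the two sets are equivalent if and only if $P^{(I)}\mathcal A^{\sigma}=\overline{\mathcal A}\,P^{(J)}$ has a solution $P\in\GL(m,q^n)$.

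To finish I would solve this equation. Setting $Y:=P^{(I)}$ and $K:=J-I$, it becomes $Y\mathcal A^{\sigma}=\overline{\mathcal A}\,Y^{(K)}$, i.e.\ $Y$ intertwines the two $q^{K}$-semilinear maps $\underline v\mapsto\mathcal A^{\sigma}\underline v^{(K)}$ and $\underline v\mapsto\overline{\mathcal A}\,\underline v^{(K)}$. Since the permutation part of $\mathcal A$ is an $m$-cycle, iterating $m$ times collapses the cyclic part, so both $m$-fold composites are \emph{diagonal} $q^{mK}$-semilinear maps, whose $i$-th diagonal entries are the ``staircase'' products of the $\alpha_j^{\sigma}$ (respectively the $\overline{\alpha}_j$) twisted by $q^{0K},q^{K},\dots,q^{(m-1)K}$, read cyclically from index $i$. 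By the rank-$m$ form of Hilbert~90 for the extension fixed by $x\mapsto x^{q^{mK}}$, two such diagonal $q^{mK}$-semilinear maps admit a nonsingular intertwiner if and only if, for some one of the $m$ cyclic alignments of the two diagonals, the quotient of corresponding entries is a $(q^{mK}-1)$-power in $\mathbb{F}_{q^n}$; reading off those $m$ quotients produces exactly $C_1,\dots,C_m$, which proves the theorem. The technical heart, and the step I expect to be hardest, is precisely this last one: performing the $m$-fold telescoping cleanly enough to recognise the $C_\delta$, and establishing both directions of the Hilbert~90 criterion — in particular producing an \emph{invertible} (not merely nonzero) intertwiner, which forces one to track $\gcd(mK,n)$ against $n$, exactly where $\gcd(I,J)=1$ is used. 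The degenerate-case bookkeeping in the reduction step is tedious but causes no real trouble.
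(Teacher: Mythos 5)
Your proposal follows essentially the same route as the paper, phrased in block-matrix language rather than entry-by-entry. The paper also specializes the system in the preceding theorem to $I_0=I$, $J_0=J$, compares coefficients of $\widetilde{x}_i^{q^0}$, $\widetilde{x}_i^{q^I}$, $\widetilde{x}_i^{q^J}$, $\widetilde{x}_i^{q^{2I}}$, $\widetilde{x}_i^{q^{I+J}}$, $\widetilde{x}_i^{q^{2J}}$ (this is exactly where $J<n/2$ enters, keeping those exponents in range), and arrives at the same conclusions you reach in block form: the conjugating matrix is forced to be $\mathrm{diag}(P,P^{(I)})$, and $P$ must satisfy the $q^K$-twisted intertwining relation, which in entry form becomes the recursion $a_{(\delta,\tau)}=\big(\overline{\alpha}_{\delta+1}/A_{\tau+1}\big)^{q^{-I}}a_{(\delta+1,\tau+1)}^{q^K}$. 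Iterating this $m$ times around the cyclic indices is precisely your ``telescoping,'' and yields the consistency condition $a_{(\delta,1)}=C_\delta^{q^{-I}}a_{(\delta,1)}^{q^{mK}}$.

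Two remarks on the points you flag as delicate. First, the natural statement is about the original $q^K$-semilinear recursion, not its $m$-fold $q^{mK}$-semilinear composite: the recursion determines every entry of $P$ lying on a given cyclic diagonal from the single entry $a_{(\delta,1)}$, and the obstruction to consistency after going once around the cycle is exactly whether $C_\delta$ is a $(q^{mK}-1)$-power. Reading this only as an intertwining problem for the two diagonal $q^{mK}$-semilinear maps would weaken the argument, since intertwining the $m$-fold composites does not in general imply intertwining the $q^K$-maps themselves; the recursion avoids this. Second, producing an \emph{invertible} $P$ in the sufficiency direction is not delicate and does not need any control of $\gcd(mK,n)$: once some $C_{\overline{\delta}}$ is a $(q^{mK}-1)$-power, set $a_{(\delta,1)}=0$ for every $\delta\neq\overline{\delta}$ and choose $a_{(\overline{\delta},1)}\neq 0$ solving the consistency equation; the recursion then makes $P$ a monomial matrix supported on the single cyclic diagonal of offset $\overline{\delta}-1$, which lies in $\GL(m,q^n)$ automatically, so $M=\mathrm{diag}(P,P^{(I)})\in\GL(2m,q^n)$. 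This is exactly the paper's construction and it dissolves the concern.
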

 \begin{proof}

Let $\sigma\in Aut(\mathbb{F}_{q^n})$ be fixed. Consider System (\ref{system}) with $I_0=I,J_0=J$. In particular for any $\delta=1,\dots,m-1$ 
 \begin{align*}
a_{(m+\delta,1)}\widetilde{x}_1+\cdots+a_{(m+\delta,m)}\widetilde{x}_m+a_{(m+\delta,m+1)}(\widetilde{x}_1\qi+A_2\widetilde{x}_2\qj)+\cdots+a_{(m+\delta,2m)}(\widetilde{x}_m\qi+A_1\widetilde{x}_1\qj)&+\\-a^{q^I}_{(\delta,1)}\widetilde{x}_1\qi-\cdots-a^{q^I}_{(\delta, m)}\widetilde{x}_m\qi&+\\-a^{q^I}_{(\delta,m+1)}(\widetilde{x}_1^{q^{2I}}+A_2\qi\widetilde{x}_2^{q^{J+I}})-\cdots-a^{q^I}_{(\delta,2m)}(\widetilde{x}_m^{q^{2I}}+A_1\qi\widetilde{x}_1^{q^{J+I}})&+\\-\overline{\alpha_{\delta+1}}(a^{q^J}_{(\delta+1,1)}\widetilde{x}_1\qj+\cdots+a^{q^J}_{(\delta+1,m)}\widetilde{x}_m\qj&+\\+a^{q^J}_{(\delta+1,m+1)}(\widetilde{x}_1^{q^{I+J}}+A_2\qj\widetilde{x}_2^{q^{2J}})+\cdots+a^{q^J}_{(\delta+1,2m)}(\widetilde{x}_m^{q^{I+J}}+A_1\qj\widetilde{x}_1^{q^{2J}}))&=0\tag{a$_\delta$}\label{eq1.1}
 \end{align*}
 and
  \begin{align*}
a_{(2m,1)}\widetilde{x}_1+\cdots+a_{(2m,m)}\widetilde{x}_m+a_{(2m,m+1)}(\widetilde{x}_1\qi+A_2\widetilde{x}_2\qj)+\cdots+a_{(2m,2m)}(\widetilde{x}_m\qi+A_1\widetilde{x}_1\qj)&+\\-a^{q^I}_{(m,1)}\widetilde{x}_1\qi-\cdots-a^{q^I}_{(m,m)}\widetilde{x}_m\qi&+\\-a^{q^I}_{(m,m+1)}(\widetilde{x}_1^{q^{2I}}+A_2\qi\widetilde{x}_2^{q^{J+I}})-\cdots-a^{q^I}_{(m,2m)}(\widetilde{x}_m^{q^{2I}}+A_1\qi\widetilde{x}_1^{q^{J+I}})&+\\-\overline{\alpha_{1}}(a^{q^J}_{(1,1)}\widetilde{x}_1\qj+\cdots+a^{q^J}_{(1,m)}\widetilde{x}_m\qj&+\\+a^{q^J}_{(1,m+1)}(\widetilde{x}_1^{q^{I+J}}+A_2\qj\widetilde{x}_2^{q^{2J}})+\cdots+a^{q^J}_{(1,2m)}(\widetilde{x}_m^{q^{I+J}}+A_1\qj\widetilde{x}_1^{q^{2J}}))&=0.\tag{a$_m$}\label{eq1.2}
 \end{align*}
 If the above equations are satisfied for every $\widetilde{x}_1,\dots,\widetilde{x}_m$ then by checking the coefficients of $\widetilde{x}_1, \dots, \widetilde{x}_m, \widetilde{x}_1^{q^{I+J}}, \dots, \widetilde{x}_m^{q^{I+J}}, \widetilde{x}_m^{q^{2J}} \widetilde{y}^{q^{2J}}, \widetilde{z}^{q^{2J}}$  we obtain
$$a_{(m+\delta,1)}=\cdots=a_{(m+\delta, m)}=a_{(\delta,m+1)}=\cdots=a_{(\delta,2m)}=0.$$
Thus the equations read:
 \begin{align*}
a_{(m+\delta,m+1)}(\widetilde{x}_1\qi+A_2\widetilde{x}_2\qj)+\cdots+a_{(m+\delta,2m)}(\widetilde{x}_m\qi+A_1\widetilde{x}_1\qj)-a^{q^J}_{(\delta,1)}\widetilde{x}_1\qi-\cdots-a^{q^I}_{(\delta, m)}\widetilde{x}_m\qi&+\\-\overline{\alpha_{\delta+1}}(a^{q^J}_{(\delta+1,1)}\widetilde{x}_1\qj+\cdots+a^{q^J}_{(\delta+1,m)}\widetilde{x}_m\qj)=0,
 \end{align*}
  with $\delta=1,\dots,m-1$, and
  \begin{align*}
a_{(2m,m+1)}(\widetilde{x}_1\qi+A_2\widetilde{x}_2\qj)+\cdots+a_{(2m,2m)}(\widetilde{x}_m\qi+A_1\widetilde{x}_1\qj)-a^{q^J}_{(m,1)}\widetilde{x}_1\qi-\cdots-a^{q^J}_{(m, m)}\widetilde{x}_m\qi&+\\-\overline{\alpha_{1}}(a^{q^J}_{(1,1)}\widetilde{x}_1\qj+\cdots+a^{q^J}_{(1,m)}\widetilde{x}_m\qj)=0.
 \end{align*}

 So
 \begin{align*}
   \widetilde{x}_1\qi(a_{(m+\delta,m+1)}-a^{q^I}_{(\delta,1)})+\widetilde{x}_2\qi(a_{(m+\delta,m+2)}-a^{q^I}_{(\delta,2)})+\dots+ \widetilde{x}_m\qi(a_{(m+\delta,2m)}-a^{q^I}_{(\delta,m)})+\\+\widetilde{x}_1\qj(a_{(m+\delta,2m)}A_1-\overline{\alpha_{\delta+1}}a^{q^J}_{(\delta+1,1)})+\widetilde{x}_2\qj(a_{(m+\delta,m+1)}A_2-\overline{\alpha_{\delta+1}}a^{q^J}_{(\delta+1,2)})+\\+\dots+\widetilde{x}_m\qj(a_{(m+\delta,2m-1)}A_m-\overline{\alpha_{\delta+1}}a^{q^J}_{(\delta+1,m)})=0,
 \end{align*}
   with $\delta=1,\dots,m-1$, and
  \begin{align*}
   \widetilde{x}_1\qi(a_{(2m,m+1)}-a^{q^I}_{(m,1)})+\widetilde{x}_2\qi(a_{(2m,m+2)}-a^{q^I}_{(m,2)})+\dots+ \widetilde{x}_m\qi(a_{(2m,2m)}-a^{q^I}_{(m,m)})+\\+\widetilde{x}_1\qj(a_{(2m,2m)}A_1-\overline{\alpha_{1}}a^{q^J}_{(1,1)})+\widetilde{x}_2\qj(a_{(2m,m+1)}A_2-\overline{\alpha_{1}}a^{q^J}_{(1,2)})+\\+\dots+\widetilde{x}_m\qj(a_{(2m,2m-1)}A_m-\overline{\alpha_{1}}a^{q^J}_{(1,m)})=0.
 \end{align*}
 These polynomials are identically zero if and only if
 $$
 \begin{array}{cc}
    a_{(m+\delta,m+\tau)}=a^{q^I}_{(\delta,\tau)},&a_{(m+\delta,2m)}=a^{q^I}_{(\delta,m)},\vspace{2mm}\\ a_{(m+\delta,m+\tau)}=\frac{\overline{\alpha_{\delta+1}}}{A_{\tau+1}}a^{q^J}_{(\delta+1,\tau+1)},&a_{(m+\delta,2m)}=\frac{\overline{\alpha_{\delta+1}}}{A_1}a^{q^J}_{(\delta+1,1)},\vspace{2mm}\\
a_{(2m,m+\tau)}=a^{q^I}_{(m,\tau)},&a_{(2m,2m)}=a^{q^I}_{(m,m)},\vspace{2mm}\\ 
a_{(2m,m+\tau)}=\frac{\overline{\alpha_{1}}}{A_{\tau+1}}a^{q^J}_{(1,\tau+1)},&a_{(2m,2m)}=\frac{\overline{\alpha_{1}}}{A_1}a^{q^J}_{(1,1)}
 \end{array}$$
 with $\delta,\tau=1,\dots,m-1$.
 
     Thus $$
     \begin{array}{cc}
a_{(\delta,\tau)}=\left( \frac{\overline{\alpha_{\delta+1}}}{A_{\tau+1}}\right)^{q^{-I}} a^{q^K}_{(\delta+1,\tau+1)},& a_{(\delta,m)}= \left(\frac{\overline{\alpha_{\delta+1}}}{A_1}\right)^{q^{-I}}a^{q^K}_{(\delta+1,1)},\vspace{2mm}\\
a_{(m,\tau)}=\left(\frac{\overline{\alpha_{1}}}{A_{\tau+1}}\right)^{q^{-I}}a^{q^K}_{(1,\tau+1)},&a_{(m,m)}=\left(\frac{\overline{\alpha_{1}}}{A_1}\right)^{q^{-I}}a^{q^K}_{(1,1)}
        
     \end{array}$$  with $\delta,\tau=1,\dots,m-1$.
     
 Thus 
 \begin{eqnarray*}
     a_{(1,1)}&=&\left( \frac{\overline{\alpha_{2}}}{A_{2}}\right)^{q^{-I}} a^{q^K}_{(2,2)}\\&=&\left( \frac{\overline{\alpha_{2}}}{A_{2}}\right)^{q^{-I}}\left( \frac{\overline{\alpha_{3}}}{A_{3}}\right)^{q^{K-I}} a^{q^{2K}}_{(3,3)}\\&=&
     \left( \frac{\overline{\alpha_{2}}}{A_{2}}\right)^{q^{-I}}\left( \frac{\overline{\alpha_{3}}}{A_{3}}\right)^{q^{K-I}}\cdots\left( \frac{\overline{\alpha_{m}}}{A_{m}}\right)^{q^{(m-2)K-I}}a^{q^{(m-1)K}}_{(m,m)}\\&=&\left( \frac{\overline{\alpha_{2}}}{A_{2}}\right)^{q^{-I}}\left( \frac{\overline{\alpha_{3}}}{A_{3}}\right)^{q^{K-I}}\cdots\left( \frac{\overline{\alpha_{m}}}{A_{m}}\right)^{q^{(m-2)K-I}}\left(\frac{\overline{\alpha_{1}}}{A_1}\right)^{q^{(m-1)K-I}}a^{q^{mk}}_{(1,1)},
     \end{eqnarray*}
     \begin{eqnarray*}  
     a_{(\delta,1)}&=&\left( \frac{\overline{\alpha_{\delta+1}}}{A_{2}}\right)^{q^{-I}} a^{q^K}_{(\delta+1,2)}\\
     &=&\left( \frac{\overline{\alpha_{\delta+1}}}{A_{2}}\right)^{q^{-I}}\cdots\left( \frac{\overline{\alpha_{m}}}{A_{m-\delta+1}}\right)^{q^{(m-\delta-1)K-I}}a^{q^{(m-\delta)K}}_{(m,m-\delta+1)}\\
     &=&\left( \frac{\overline{\alpha_{\delta+1}}}{A_{2}}\right)^{q^{-I}}\cdots\left( \frac{\overline{\alpha_{m}}}{A_{m-\delta+1}}\right)^{q^{(m-\delta-1)K-I}}
     \left(\frac{\overline{\alpha_{1}}}{A_{m-\delta+2}}\right)^{q^{(m-\delta)K-I}}a^{q^{(m-\delta+1)K}}_{(1,m-\delta+2)}\\
     &=&\left( \frac{\overline{\alpha_{\delta+1}}}{A_{2}}\right)^{q^{-I}}\cdots
     \left(\frac{\overline{\alpha_{1}}}{A_{m-\delta+2}}\right)^{q^{(m-\delta)K-I}}\cdots
     \left(\frac{\overline{\alpha_{\delta-1}}}{A_m}\right)^{q^{(m-2)K-I}}a^{q^{(m-1)K}}_{\delta-1,m}\\
     &=&\left( \frac{\overline{\alpha_{\delta+1}}}{A_{2}}\right)^{q^{-I}}\cdots
     \left(\frac{\overline{\alpha_{\delta-1}}}{A_m}\right)^{q^{(m-2)K-I}}
     \left(\frac{\overline{\alpha_{\delta}}}{A_1}\right)^{q^{(m-1)K-I}}a^{q^{mK}}_{(\delta,1)},\\ 
     a_{(m,1)}&=&\left(\frac{\overline{\alpha_{1}}}{A_{2}}\right)^{q^{-I}}a^{q^K}_{(1,2)}\\
     &=&\left(\frac{\overline{\alpha_{1}}}{A_{2}}\right)^{q^{-I}}\cdots\left(\frac{\overline{\alpha_{m-1}}}{A_m}\right)^{q^{(m-2)K-I}}a^{q^{(m-1)K}}_{(m-1,m)}\\
     &=&\left(\frac{\overline{\alpha_{1}}}{A_{2}}\right)^{q^{-I}}\cdots\left(\frac{\overline{\alpha_{m-1}}}{A_m}\right)^{q^{(m-2)K-I}}\left(\frac{\overline{\alpha_{m}}}{A_1}\right)^{q^{(m-1)K-I}}a^{q^{mK}}_{(m,1)}.
     \end{eqnarray*}    
     
     This shows that $a_{(\delta,1)}=C_{\delta}^{q^{-I}}a_{(\delta,1)}^{q^{mK}}$, $\delta=1,\dots,m$.

     Note that if all $C_{\delta}$ are not $(q^{mK}-1)$-power, then $a_{(\delta,1)}=0$ for all $\delta=1,\dots,m$ thus $M\notin \GL(2m,{q^n})$.
     
     Conversely, if there exists $\overline{\delta}\in [1,\dots,m]$ such that $C_{\overline{\delta}}$ is a $(q^{mK}-1)$-power, there exists a nonzero $a_{(\delta,1)}$ satisfying $a_{(\overline{\delta},1)}=C_{\overline{\delta}}^{q^{-I}}a_{(\overline{\delta},1)}^{q^{mK}}$ and, by considering $a_{(\delta,1)}=0$ for all $\delta\neq\overline{\delta}$, we can construct a non singular $M\in GL(2m,{q^n})$ and the two sets are $\Gamma \mathrm{L}$-equivalent.  
 \end{proof}

This necessary and sufficient condition allows us to obtain a lower bound on the number of inequivalent scattered sequences.

\begin{prop}
   Given $I,J$ such that $J>I,J>\frac{n}{2}$, if $n=mn'$ and $m\not|\, K$ then there are at least \begin{equation*}
    \frac{1-c^{q,m}}{mnh}\cdot(q^{m\gcd(n',K)})
    \end{equation*} inequivalent scattered sets $\us$.
\end{prop}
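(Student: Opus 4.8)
The plan is a pigeonhole estimate: bound below the number of parameter vectors $\mathbf A\in(\mathbb F_{q^n}^*)^m$ for which $\us$ is scattered, bound above how many members of the family $\{U^{I,J}_{\overline{\mathbf A}}\}$ can lie in one $\Gamma\mathrm{L}(2m,q^n)$-equivalence class, and divide. For the first estimate I would simply quote the preceding Proposition (valid under $n=mn'$ and $m\nmid K$): it produces at least $Q^{I,J}_{n,m}\ge (q^n-1)^{m-1}\lceil (q^n-1)(1-c^{q,m})\rceil$ vectors $\mathbf A$ with $\us$ scattered (in fact also indecomposable), and, as recalled there, $m\nmid K$ is exactly what keeps $Q^{I,J}_{n,m}$ positive.

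For the class size I would use the equivalence theorem proved above: if $\us$ is $\Gamma\mathrm{L}(2m,q^n)$-equivalent to $U^{I,J}_{\overline{\mathbf A}}$, then there are $\sigma\in\mathrm{Aut}(\mathbb F_{q^n})$ and $\delta\in\{1,\dots,m\}$ with $C_\delta=C_\delta(\mathbf A,\overline{\mathbf A},\sigma)$ a $(q^{mK}-1)$-power in $\mathbb F_{q^n}$. Since $|\mathrm{Aut}(\mathbb F_{q^n})|=hn$, it suffices to count, for each fixed pair $(\sigma,\delta)$, the $\overline{\mathbf A}$ satisfying this single power condition and then multiply by $hnm$. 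Fixing $\overline\alpha_2,\dots,\overline\alpha_m$ freely — $(q^n-1)^{m-1}$ choices — one checks from the explicit formula that $\overline\alpha_1$ occurs in $C_\delta$ exactly once, as $\overline\alpha_1^{q^{eK}}$ for some $e\in\{0,\dots,m-1\}$, so $\overline\alpha_1\mapsto C_\delta$ is a nonzero constant times a power of the Frobenius, hence a bijection of $\mathbb F_{q^n}^*$. Thus the admissible $\overline\alpha_1$'s number exactly the $(q^{mK}-1)$-powers of $\mathbb F_{q^n}^*$, that is
\[
\frac{q^n-1}{\gcd(q^n-1,q^{mK}-1)}=\frac{q^n-1}{q^{\gcd(n,mK)}-1}=\frac{q^n-1}{q^{m\gcd(n',K)}-1},
\]
using $n=mn'$. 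Hence a single equivalence class meets the family in at most $hnm\,(q^n-1)^{m-1}\cdot\frac{q^n-1}{q^{m\gcd(n',K)}-1}$ sets, and dividing $Q^{I,J}_{n,m}$ by this quantity gives a lower bound on the number of inequivalent scattered $\us$ of the asserted shape, positive since $c^{q,m}<1$ and since $m\nmid K$ makes the class bound non-trivial.

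The delicate point is the equivalence input when $J>n/2$: the equivalence theorem deduces ``some $C_\delta$ is a $(q^{mK}-1)$-power'' by separating the Frobenius monomials $\widetilde x_i^{q^a}$ appearing after substitution, which is precisely what the hypothesis $J<n/2$ guarantees there. I would therefore first replace $\us$ by a suitable power-of-Frobenius image — a $\Gamma\mathrm{L}$-equivalence — whose parameters land in the admissible range, verifying that the invariant $\gcd(n',K)$ governing the count is unchanged by this twist. Everything after that (the bijectivity of $\overline\alpha_1\mapsto C_\delta$, and the arithmetic comparing the two displayed bounds, including the harmless passage between $q^{m\gcd(n',K)}-1$ and $q^{m\gcd(n',K)}$) is routine bookkeeping.
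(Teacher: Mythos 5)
Your approach is the same as the paper's: lower-bound the number of admissible $\mathbf A$ by quoting the preceding proposition, upper-bound the size of each $\Gamma\mathrm{L}$-class by observing (for fixed $\sigma$ and $\delta$, and fixed remaining coordinates) that $\alpha_1\mapsto C_\delta$ is a bijection of $\mathbb F_{q^n}^*$, so the condition ``$C_\delta$ is a $(q^{mK}-1)$-power'' holds for exactly $(q^n-1)/\gcd(q^n-1,q^{mK}-1)=(q^n-1)/(q^{m\gcd(n',K)}-1)$ values, multiply by $mnh$, and divide. The arithmetic simplification using $c^{q,m}$ is identical.

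You correctly flag a real tension between the equivalence theorem (hypothesis $J<n/2$) and this proposition (hypothesis $J>n/2$), which the paper's proof silently steps over. Your proposed repair, though, does not work as stated: a power of the Frobenius acts on coordinates, hence replaces $\mathbf A$ by a Galois conjugate, but leaves the exponents $I,J$ (and therefore $K$ and the set of Frobenius monomials after substitution) unchanged, so it cannot move $(I,J)$ into the admissible range $J<n/2$. A reparametrization $x_i\mapsto x_i^{q^t}$ changes the exponents but is not an $\mathbb F_{q^n}$-semilinear map on the ambient $\mathbb F_{q^n}^{2m}$, so it is not a $\Gamma\mathrm{L}$-equivalence and cannot be invoked here either. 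Given that the paper's own proof makes no provision for $J>n/2$ and the invariant $\gcd(n',K)$ you identify does not change under any of these operations, the most economical reading is that the hypothesis ``$J>\frac n2$'' is a misprint for ``$J<\frac n2$'', in which case both your argument and the paper's are complete.
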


\begin{proof}

Fix  $(\overline{\alpha_1},\dots,\overline{\alpha_m})\in\mathbb{F}^m_{q^n}$ and consider all the  $(\alpha_1, \dots, \alpha_m)\in\mathbb{F}^m_{q^n}$ such that the corresponding sets are equivalent. For given $\alpha_2,\dots,\alpha_m$, the function $$\alpha_1 \longmapsto C_1=\left( \frac{\overline{\alpha_{2}}}{{\alpha}^{\sigma}_{2}}\right)\left( \frac{\overline{\alpha_{3}}}{{\alpha}^{\sigma}_{3}}\right)^{q^{K}}\cdots\left( \frac{\overline{\alpha_{m}}}{{\alpha}^{\sigma}_{m}}\right)^{q^{(m-2)K}}\left(\frac{\overline{\alpha_{1}}}{{\alpha}^{\sigma}_1}\right)^{q^{(m-1)K}} $$ is a permutation of $\field$. Since $\alpha_2,\dots,\alpha_m$ can vary in $(q^n-1)^{m-1}$ ways, $C_1$ is a $(q^{mk}-1)$-power for $(q^n-1)^m/{\gcd(q^{mK}-1,q^n-1)}$ $m
$-uples $(\alpha_1, \dots, \alpha_m)$.

An equivalence with $\sigma\neq id$ corresponds to an equivalence with $(\alpha_1^{\sigma}, \dots, \alpha_1^{\sigma})$. Via the condition on $C_1$, there are  at most $nh(q^n-1)^m/({\gcd(q^{mK}-1,q^n-1)})$ sets $\us$ equivalent to $U_{\bold{\overline{A}}}^{I,J}$.

Arguing analogously for  $C_2,\dots,C_m$, we obtain that  $$\frac{mnh(q^n-1)^m}{\gcd(q^{mK}-1,q^n-1)}$$
is an upper bound for the number of sets $\us$ equivalent to a fixed $U_{\bold{\overline{A}}}^{I,J}$.

Using the lower bound on the number of distinct instances of $(\alpha_1, \dots, \alpha_m)$ giving rise to a scattered and indecomposable set $\us$, remarking that $n=mn'$, we can obtain a lower bound on the number of inequivalent scattered sequences
\begin{equation*}
    (q^n-1)^{m-1}\left((q^n-1)-\frac{q^n-1}{\gcd(q^n-1,C_{K,m})}-\sum_{j=1}^{\lceil \frac{m-1}{2}\rceil}(q^n-1)\frac{q^{\gcd(mn',j)}-1}{q^{m\gcd(n',K)}-1}\right)\cdot\frac{q^{m\gcd(n',K)}-1}{mnh(q^n-1)^m}
\end{equation*}
\begin{equation*}
    \geq\frac{1-c^{q,m}}{mnh}\cdot(q^{m\gcd(n',K)}).
\end{equation*}
    
\end{proof}

 \section*{ Acknowledgement } This work was  supported by the
Research Project of MIUR (Italian Office for University and
Research) ``Strutture Geometriche, Combinatoria e loro Applicazioni''.


\section*{Conflict of interest}
 On behalf of all authors, the corresponding author states that there is no conflict of interest. 
\bibliographystyle{abbrv}

\end{document}